\newcommand{\F}{{\mathbb F}}
\newcommand{\Z}{{\mathbb Z}}
\newcommand{\Fix}{{\mathcal F}}
\newcommand{\D}{{\mathcal D}}
\newcommand{\gauss}[3]{\genfrac{[}{]}{0pt}{}{#1}{#2}_{#3}}
\newcommand{\nequiv}{\not\equiv}
\newcommand{\kmmat}{M}
\DeclareMathOperator{\Aut}{Aut}
\DeclareMathOperator{\GammaL}{\Gamma L}
\DeclareMathOperator{\PGammaL}{P\Gamma L}
\DeclareMathOperator{\PGL}{PGL}
\DeclareMathOperator{\GL}{GL}
\DeclareMathOperator{\PG}{PG}
\DeclareMathOperator{\id}{id}
\newtheorem{theorem}{Theorem}
\newtheorem{corollary}[theorem]{Corollary}
\newtheorem{lemma}{Lemma}[section]
\newtheorem{definition}[lemma]{Definition}
\newtheorem{remark}[lemma]{Remark}
\newtheorem{example}[lemma]{Example}
\def\ps@pprintTitle{%
\let\@oddhead\@empty
\let\@evenhead\@empty
\def\@oddfoot{\centerline{\thepage}}%
\let\@evenfoot\@oddfoot}
\begin{document}
\begin{frontmatter}

\title{On the automorphism group\\of a binary $q$-analog of the Fano plane}

\author{Michael Braun}
\address{Faculty of Computer Science, University of Applied Sciences, Darmstadt, Germany}

\author{Michael Kiermaier}
\address{Lehrstuhl II f\"ur Mathematik, Universit\"at Bayreuth, Germany}

\author{Anamari Naki\'{c}}
\address{Faculty of Electrical Engineering and Computing, University of Zagreb, Croatia}

\begin{abstract}
The smallest set of admissible parameters of a $q$-analog of a Steiner system is $S_2[2,3,7]$.
The existence of such a Steiner system -- known as a binary $q$-analog of the Fano plane -- is still open.
In this article, the automorphism group of a putative binary $q$-analog of the Fano plane is investigated by a combination of theoretical and computational methods.
As a conclusion, it is either rigid or its automorphism group is cyclic of order $2$, $3$ or $4$.
Up to conjugacy in $\GL(7,2)$, there remains a single possible group of order $2$ and $4$, respectively, and two possible groups of order $3$.
For the automorphisms of order $2$, we give a more general result which is valid for any binary $q$-Steiner triple system.
\end{abstract}

\end{frontmatter}

\section{Introduction}
\label{sect:introduction}
Due to the application in error-correction in randomized network coding \cite{KK08}, $q$-analogs of combinatorial designs have gained a lot of interest lately.
Arguably the most important open problem in this field is the question of the existence of a $2$-$(7,3,1)_q$ design, as it has the smallest admissible parameter set of a non-trivial $q$-Steiner system with $t \geq 2$.
It is known as a $q$-analog of the Fano plane and has been tackled in many articles~\cite{EV11,HS11,KP14,Met99,MMY95,Tho87,Tho96}.

In this paper we investigate the automorphism group of a putative binary $q$-analog of the Fano plane.
The following result will be proven in Section~\ref{sect:autfano}.
\begin{theorem}\label{tm:main}
A binary $q$-analog of the Fano plane is either rigid or its automorphism group is cyclic of order $2$, $3$ or $4$.
Representing the automorphism group as a subgroup of $\GL(7,2)$, up to conjugacy it is contained in the following list:
\begin{enumerate}[(a)]
\item The trivial group\text{.}
\item The group of order $2$
\[
	\left\langle\left(
	\begin{smallmatrix}
	0& 1& 0& 0& 0& 0& 0 \\
	1& 0& 0& 0& 0& 0& 0 \\
	0& 0& 0& 1& 0& 0& 0 \\
	0& 0& 1& 0& 0& 0& 0 \\
	0& 0& 0& 0& 0& 1& 0 \\
	0& 0& 0& 0& 1& 0& 0 \\
	0& 0& 0& 0& 0& 0& 1 
	\end{smallmatrix}\right)\right\rangle\text{.}
\]
\item One of the following two groups of order $3$:
\[
\left\langle
\left(
\begin{smallmatrix}
0& 1& 0& 0& 0& 0& 0\\ 
1& 1& 0& 0& 0& 0& 0\\ 
0& 0& 0& 1& 0& 0& 0\\
0& 0& 1& 1& 0& 0& 0\\
0& 0& 0& 0& 0& 1& 0\\
0& 0& 0& 0& 1& 1& 0\\
0& 0& 0& 0& 0& 0& 1
\end{smallmatrix}
\right)
\right\rangle
\quad
\text{and}
\quad
\left\langle
\left(
\begin{smallmatrix}
0& 1& 0& 0& 0& 0& 0\\ 
1& 1& 0& 0& 0& 0& 0\\ 
0& 0& 0& 1& 0& 0& 0\\ 
0& 0& 1& 1& 0& 0& 0\\ 
0& 0& 0& 0& 1& 0& 0\\ 
0& 0& 0& 0& 0& 1& 0\\ 
0& 0& 0& 0& 0& 0& 1
\end{smallmatrix}
\right)\right\rangle\text{.}
\]
\item The cyclic group of order $4$
\[
\left\langle
\left(
\begin{smallmatrix}
1&1&0&0&0&0&0 \\
0&1&1&0&0&0&0 \\
0&0&1&0&0&0&0 \\
0&0&0&1&1&0&0 \\
0&0&0&0&1&1&0 \\
0&0&0&0&0&1&1 \\
0&0&0&0&0&0&1
\end{smallmatrix}\right)
\right\rangle\text{.}
\]
\end{enumerate}
\end{theorem}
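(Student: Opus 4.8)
The plan is to write $G=\Aut(\D)$ for the automorphism group of a putative binary $q$-analog of the Fano plane $\D$ and to realise it as a subgroup of $\PGammaL(7,2)=\PGL(7,2)=\GL(7,2)$ (over $\F_2$ there are no non-trivial field automorphisms and no non-trivial scalars), exploiting that $\D$ is a $2$-$(7,3,1)_2$ Steiner system with $v=\gauss{7}{1}{2}=127$ points, $b=\gauss{7}{2}{2}/\gauss{3}{2}{2}=381$ blocks and replication number $r=21$, sitting inside
\[
	|\GL(7,2)|=2^{21}\cdot 3^4\cdot 5\cdot 7^2\cdot 31\cdot 127\text{.}
\]
For an automorphism $\sigma$ of prime order $p$ I would first read off the possible $\F_2[\sigma]$-module structures on $\F_2^7$: the multiplicative order of $2$ modulo $p$ forces the degrees of the non-trivial irreducible constituents (degree $7$ for $p=127$, $5$ for $p=31$, $4$ for $p=5$, $3$ for $p=7$, $2$ for $p=3$, $1$ for $p=2$), so for each $p$ there are only finitely many conjugacy classes of $\langle\sigma\rangle$ in $\GL(7,2)$, each with a known fixed subspace $\Fix(\sigma)$. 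The central elementary tool is that every line (i.e.\ $2$-dimensional subspace) contained in $\Fix(\sigma)$ is itself fixed and hence lies in a fixed block, while the restriction of $\sigma$ to a fixed block lies in $\GL(3,2)$ of order $168$; thus a fixed block on which $\sigma$ acts with order coprime to $168$ must lie inside $\Fix(\sigma)$.

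Using this I would eliminate the large primes. For $p=31$ the fixed space is exactly $2$-dimensional, i.e.\ a single line $\ell$; its unique block is fixed, and since $\gcd(31,168)=1$ that block would have to lie in the $2$-dimensional $\Fix(\sigma)$, which is impossible, so $31\nmid|G|$. For $p=7$ the order of $2$ is $3$, giving either a single cubic constituent ($\dim\Fix(\sigma)=4$) or two of them ($\dim\Fix(\sigma)=1$); in the first case every one of the $\gauss{4}{2}{2}=35$ lines of $\Fix(\sigma)$ lies in a fixed block inside $\Fix(\sigma)$, so those blocks would form an $S_2[2,3,4]$, which cannot exist because it would need $5$ blocks and hence a non-integral replication number $5\cdot\gauss{3}{1}{2}/\gauss{4}{1}{2}=35/15$. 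The surviving prime-order cases ($p=5$; $p=7$ with $\dim\Fix(\sigma)=1$; and the Singer case $p=127$, which is point-transitive with all point-, line- and block-orbits free) I would reduce to the corresponding Kramer--Mesner system, the $0/1$ incidence system between $\sigma$-orbits on lines and on blocks, and rule each out by an exhaustive (integer-linear-programming) search, leaving only $p\in\{2,3\}$.

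It remains to bound the exponents and the composite orders. The involution case is handled by a self-contained counting argument, valid for every binary $q$-Steiner triple system: combining the fixed-block analysis above with the count of lines of $\Fix(\sigma)$ covered by blocks lying inside $\Fix(\sigma)$ versus blocks meeting it in a line forces $\dim\Fix(\sigma)=4$ (Jordan type consisting of three blocks of size two), which up to conjugacy is exactly the matrix in (b). For $p=3$ the relevant classes are those with $\dim\Fix(\sigma)\in\{1,3\}$, which is where the two groups in (c) come from; for the $1$-dimensional class one checks that the derived structure at the fixed point is a $\sigma$-invariant line-spread of the quotient $\PG(5,2)$, after which feasibility is decided computationally. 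Finally I would rule out every proper over-group of a surviving cyclic group: order $9$, order $8$ (note that $J_6\oplus J_1$ squares to the admissible order-$4$ type $J_3\oplus J_3\oplus J_1$, so it is \emph{not} excluded a priori and must be tested directly), and the mixed orders $6$ and $12$ are each eliminated by the infeasibility of the associated invariant-design system; the order-$4$ search leaves only the cyclic class in (d) and excludes the Klein four-group. Assembling these facts yields $|G|\in\{1,2,3,4\}$ with $G$ cyclic and conjugate to one of (a)--(d).

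The main obstacle is the computational core: reducing each remaining conjugacy class of a prime- or prime-power-order cyclic subgroup to a Kramer--Mesner incidence system and certifying infeasibility (or, for the survivors, pinning down the unique admissible class), and, just as delicate, verifying that the conjugacy classification is complete, so that no candidate group is overlooked.
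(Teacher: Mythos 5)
Your plan follows the paper's architecture almost exactly: realize $\Aut(\D)\leq\GL(7,2)$, classify the conjugacy classes of small prime-power subgroups, settle the involutions by a counting argument (your line-count version is equivalent to the paper's Theorem~\ref{thm:aut2}), and defer the survivors to Kramer--Mesner computations; your theoretical eliminations of order $31$ (the fixed line's block cannot be invariant) and of the order-$7$ class with $4$-dimensional fixed space (a non-admissible $S_2[2,3,4]$) are actually slicker than the paper, which disposes of those cases by an orbit-length count and a zero row of the reduced system. However, your order-$8$ parenthetical contains a genuine error that would derail that step. Over $\F_2$ one has $J_3^2=J_2\oplus J_1$, so $(J_3\oplus J_3\oplus J_1)^2$ is the involution type $A_{7,2}$, i.e.\ $s=2$, which your own involution argument (and Theorem~\ref{thm:aut2}) forbids for $v=7\equiv 1\bmod 6$. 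Hence $J_3\oplus J_3\oplus J_1$ is \emph{not} the admissible order-$4$ type: it is excluded a priori, and it is not the group in (d), which is $J_4\oplus J_3$ (indeed $(J_4\oplus J_3)^2 = J_2\oplus J_2\oplus J_2\oplus J_1$, the allowed type $A_{7,3}$). Consequently $J_6\oplus J_1$, whose fourth power has type $A_{7,2}$, \emph{is} excluded a priori, while the order-$8$ element that genuinely survives the filtering and must be tested computationally is the full Jordan block $J_7$ (with $J_7^2=J_4\oplus J_3$), together with two quaternion groups. With your criterion you would test $J_6\oplus J_1$, which needs no test, and skip $J_7$, which does --- leaving the exclusion of order $8$, and with it the restriction $\#\Aut(\D)\in\{1,2,3,4,6,12\}$, unproven.

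A second gap concerns $p=3$: you assert that \enquote{the relevant classes are those with $\dim\Fix(\sigma)\in\{1,3\}$}, but the third class, with $\dim\Fix(\sigma)=5$, must be eliminated by some argument, and your \enquote{central elementary tool} does not apply, since $\gcd(3,168)=3\neq 1$; the same objection already affects your $p=7$ argument, where $\gcd(7,168)=7$. Both uses can be repaired by a sharper local fact: the restriction of $\sigma$ to the (necessarily invariant) block through a line of $\Fix(\sigma)$ fixes that line pointwise, and the pointwise stabilizer of a line in $\GL(3,2)$ is elementary abelian of order $4$, so any restriction of odd order is the identity and the block lies inside $\Fix(\sigma)$. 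Granting this, your $p=7$ argument stands, and for $p=3$ with $\dim\Fix(\sigma)=5$ the blocks inside $\Fix(\sigma)$ would form an $S_2[2,3,5]$ with $\gauss{5}{2}{2}/\gauss{3}{2}{2}=155/7$ blocks, a contradiction --- a theoretical exclusion of the class the paper removes by computer ($G_{3,3}$). With these two repairs, and making explicit the Sylow argument that reduces the composite orders to $6$ and $12$, your outline becomes a correct proof along essentially the paper's lines.
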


The idea of eliminating automorphism groups has been
used in the existence problems for other notorious combinatorial objects. Probably the most prominent example is the projective plane of order $10$. In \cite{W79a,W79b} it was shown that the order of an automorphism is either $3$ or $5$. Those two orders had been eliminated later in \cite{AHT80,JT81}, implying that the automorphism group of a projective plane of order $10$ must be trivial.
Finally, the non-existence has been shown in \cite{LTS89} in an extensive computer search. 

There are other examples of existence problems where the same idea has been used, that remain still open.
In coding theory, the question of the existence of a binary doubly-even self-dual code with the parameters $[72,36,16]$ was raised more than 40 years ago in \cite{S73}.
Its automorphisms have been heavily investigated in a series of articles.
After the latest result \cite{YY14}, it is known that its automorphism group is either trivial, cyclic of order $2$, $3$ or $5$, or a Klein four group.
In classical design theory, the smallest $v$ for which the existence of a $3$-design of order $v$ is undecided is $16$; indeed a $3$-$(16, 7, 5)$ design is still unknown.
Recent results show that if such design exists, then its automorphism group is either trivial or a $2$-group \cite{E10, N15}.

The proof of Theorem~\ref{tm:main} is partially based on the following more general result on automorphisms of order $2$ of binary $q$-Steiner triple systems, which will be shown in Section~\ref{sect:triple}.
\begin{theorem}\label{thm:aut2}
Let $\D$ be a binary $S_2[2,3,v]$ $q$-Steiner triple system and $A\in\GL(v,2)$ the matrix representation of an automorphism of $\D$ of order $2$.
For $s\in\{1,\ldots,\lfloor v/2\rfloor\}$ let $A_{v,s}$ denote the $v\times v$ block diagonal matrix built from $s$ blocks $\left(\begin{smallmatrix}0 & 1\\1 & 0\end{smallmatrix}\right)$ followed by a $(v-2s)\times (v-2s)$ unit matrix.
\begin{enumerate}[(a)]
\item In the case $v\equiv 1\bmod {6}$, $A$ is conjugate to a matrix $A_{v,s}$ with $3\mid s$.
\item In the case $v\equiv 3\bmod {6}$, $A$ is conjugate to a matrix $A_{v,s}$ with $s\nequiv 2\bmod {3}$.
\end{enumerate}
\end{theorem}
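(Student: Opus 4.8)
The plan is to pin down $s$ modulo $3$ by double counting the lines (two-dimensional subspaces) fixed by $A$. First recall that over $\F_2$ the relation $A^2=I$ gives $(A+I)^2=0$, so the minimal polynomial divides $(x+1)^2$ and every involution of $\GL(v,2)$ is conjugate to exactly one $A_{v,s}$, where $s=\operatorname{rank}(A+I)$ counts the $2\times 2$ blocks. Writing $\Fix=\ker(A+I)$ for the fixed space, we have $\dim\Fix=v-s$, while $\operatorname{Im}(A+I)$ is an $s$-dimensional subspace of $\Fix$ because $(A+I)^2=0$. The fixed points of the induced action on $\PG(v-1,2)$ are exactly the points of $\Fix$, and it suffices to prove the stated congruences on $s$.

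Next I classify the fixed lines into two types. A line $L$ with $A(L)=L$ either lies inside $\Fix$ (\emph{type (i)}), or $A$ fixes a single point $p\in L$ and interchanges the other two; the latter (\emph{type (ii)}) is characterised by $(A+I)x=p$ for the two non-fixed points $x$, which forces $p\in\operatorname{Im}(A+I)\setminus\{0\}$. Counting type (i) lines gives $\gauss{v-s}{2}{2}$. For type (ii), each of the $2^s-1$ admissible $p$ yields the coset $(A+I)^{-1}(p)$ of $\Fix$ of size $2^{v-s}$, whose elements pair up as $\{x,x+p\}$ into $2^{v-s-1}$ distinct lines; hence there are $(2^s-1)\,2^{v-s-1}$ type (ii) lines in total.

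The design $\D$ now enters: since every line lies in a unique block, a fixed line lies in a fixed block. For a fixed block $B$ the restriction $A|_B$ lies in $\GL(3,2)$ and squares to the identity, so either $B\subseteq\Fix$ (\emph{case 1}, contributing its $7$ lines, all of type (i)) or $B\cap\Fix$ is two-dimensional (\emph{case 2}, contributing exactly one type (i) line and exactly two type (ii) lines). Writing $f_0,f_1$ for the numbers of blocks of each case and matching the two global counts yields
\[
2f_1=(2^s-1)\,2^{v-s-1},\qquad 7f_0+f_1=\gauss{v-s}{2}{2},
\]
so that $f_1=(2^s-1)\,2^{v-s-2}$ and the argument reduces to requiring $f_0=\tfrac{1}{7}\bigl(\gauss{v-s}{2}{2}-f_1\bigr)$ to be a non-negative integer.

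The operative constraint is the integrality of $f_0$, that is, $\gauss{v-s}{2}{2}\equiv f_1\pmod 7$. Because $2^3\equiv 1\pmod 7$, both sides depend only on $v$ and $s$ modulo $3$; substituting $3^{-1}\equiv 5$ and $4^{-1}\equiv 2\pmod 7$ turns this into a single congruence in $s\bmod 3$ once $v\bmod 3$ is fixed. Evaluating the three residues of $s$ then shows that the congruence holds exactly when $3\mid s$ in the case $v\equiv 1\bmod 6$, and exactly when $s\nequiv 2\bmod 3$ in the case $v\equiv 3\bmod 6$, which is the claim. The step requiring most care is the local analysis of a case 2 block, namely verifying that it contains precisely one fixed line lying in $\Fix$ together with exactly two further fixed lines on which $A$ acts as a transposition: it is this pair of coefficients $1$ and $2$ that fixes the constants in the two counting equations and hence produces the exact congruence.
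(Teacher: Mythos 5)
Your proof is correct and takes essentially the same route as the paper: your type (i)/type (ii) fixed lines correspond exactly to the paper's pairs of fixed points and point-orbits of length $2$, your case 1/case 2 blocks are the paper's fixed blocks with $7$ resp.\ $3$ fixed points (so $f_0=F_7$, $f_1=F_3$), and your two counting equations and the final integrality-modulo-$7$ computation (using that $2$ has order $3$ modulo $7$) are the same as the paper's Lemma~\ref{lem:fix_2} and Corollary~\ref{cor:ord2}. The only difference is organizational: you index the double count by fixed lines rather than by point pairs and orbits, which changes the second equation by a harmless factor of $3$.
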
 

\section{Preliminaries}
\subsection{Group actions}
Suppose that a finite group $G$ is acting on a finite set $X$.
For $x \in X$ and $\alpha \in G$, the image of $x$ under $\alpha$ is denoted by $x^{\alpha}$.
The set $x^G = \{x^{\alpha} \mathrel{:} \alpha \in G\}$ is called the \emph{orbit of $x$ under $G$} or the \emph{$G$-orbit of $x$}, in short.
The set of all orbits is a partition of $X$.
We say that $x \in X$ is \emph{fixed} by $G$ if $x^{G} = \{x\}$.
So the fixed elements correspond to the orbits of length $1$.
The set $G_x = \{\alpha \in G \,:\, x^{\alpha} = x \}$ is a subgroup of $G$, called the \emph{stabilizer} of $x$ in $G$.
For all $g\in G$ we have
\begin{equation}
	\label{eq:stab_conj}
	G_{x^{g}} = g^{-1} G_x g\text{.}
\end{equation}
In particular the stabilizers of elements in the same orbit are conjugate, and any conjugate subgroup $H$ of $G_x$ is the stabilizer of some element in the orbit of $x$.
The Orbit-Stabilizer Theorem states that $\#x^G = [G : G_x]$,%
\footnote{The symbol $\#$ denotes the cardinality of a set.}
implying that $\#x^G$ divides $\#G$.
The action of $G$ on $X$ extends in the obvious way to an action on the set of subsets of $X$:
For $S\subseteq X$, we set $S^\alpha = \{x^{\alpha} \,:\, x \in S\}$.
For further reading on the theory of finite group actions, see \cite{Ker99}.

\subsection{Linear and semilinear maps}
Let $V,W$ be vector spaces over a field $F$.
A map $f : V \to W$ is called \emph{semilinear} if it is additive and if there is a field automorphism $\sigma\in\Aut(F)$ such that $f(\lambda\mathbf{v}) = \sigma(\lambda) f(\mathbf{v})$ for all $\lambda\in F^\times$ and all $\mathbf{v}\in V$.
The map $f$ is \emph{linear} if and only if $\sigma = \id_F$.
In fact, the \emph{general linear group} $\GL(V)$ of all linear bijections of $V$ is a normal subgroup of the \emph{general semilinear group} $\GammaL(V)$ of all semilinear bijections of $V$.
Moreover, $\GammaL(V)$ decomposes as a semidirect product $\GL(V) \rtimes \Aut(F)$.
In this representation, the natural action on $V$ takes the form $(\phi,\sigma)(\mathbf{v}) = \phi(\sigma(\mathbf{v}))$, where $\sigma(\mathbf{v})$ is the component-wise application of $\sigma$ to the coordinate vector of $\mathbf{v}$ with respect to some fixed basis.

The center $Z(\GL(V))$ of $\GL(V)$ consists of all diagonal maps $\mathbf{v} \mapsto \lambda\mathbf{v}$ with $\lambda\in F^\times$.
It is a normal subgroup of $\GL(V)$ and of $\GammaL(V)$.
The quotient group $\GL(V)/Z(\GL(V))$ is known as the \emph{projective linear group} $\PGL(V)$, and the quotient group $\GammaL(V)/Z(\GL(V))$ is known as the \emph{projective semilinear group} $\PGammaL(V)$.

\subsection{The subspace lattice}
Let $V$ denote a $v$-dimensional vector space over the finite field $\F_q$ with $q$ elements.
For simplicity, a subspace of $V$ of dimension $r$ will be called an \emph{$r$-subspace}.
The set of all $r$-subspaces of $V$ is called the \emph{Gra{\ss}mannian} and is denoted by $\gauss{V}{r}{q}$.
As in projective geometry, the $1$-subspaces of $V$ are called \emph{points}, the $2$-subspaces \emph{lines} and the $3$-subspaces \emph{planes}.
Our focus lies on the case $q = 2$, where the $1$-subspaces $\langle\mathbf{x}\rangle_{\F_2}\in \gauss{V}{1}{2}$ are in one-to-one correspondence with the nonzero vectors $\mathbf{x} \in V\setminus\{\mathbf{0}\}$.
The number of all $r$-subspaces of $V$ is given by
$$
\#\gauss{V}{r}{q} = \gauss{v}{r}{q} = \frac{(q^v-1)\cdots(q^{v-r+1}-1)}{(q^r-1)\cdots(q-1)}.
$$
The set $\mathcal{L}(V)$ of all subspaces of $V$ forms the subspace lattice of $V$.

By the fundamental theorem of projective geometry, for $v\neq 2$ the automorphism group of $\mathcal{L}(V)$ is given by the natural action of $\PGammaL(V)$ on $\mathcal{L}(V)$.
In the case that $q$ is prime, the group $\PGammaL(V)$ reduces to $\PGL(V)$, and for the case of our interest $q = 2$, it reduces further to $\GL(V)$.
After a choice of a basis of $V$, its elements are represented by the invertible $v\times v$ matrices $A$, and the action on $\mathcal{L}(V)$ is given by the vector-matrix-multiplication $\mathbf{v} \mapsto \mathbf{v} A$.

\subsection{Designs}
\begin{definition}
Let $t,v,k$ be integers with $0 \leq t \leq k\leq v$, $\lambda$ another positive integer and $\mathcal{P}$ a set of size $v$.
A $t$-$(v,k,\lambda)$ \emph{(combinatorial) design} $\D$ is a set of $k$-subsets of $\mathcal{P}$ (called \emph{blocks}) such that every $t$-subset of $\mathcal{P}$ is contained in exactly $\lambda$ blocks of $\D$.
When $\lambda = 1$, $\D$ is called a \emph{Steiner system} and denoted by $S(t,k,v)$. 
If additionally $t=2$ and $k=3$, $\D$ is called a \emph{Steiner triple system}.
\end{definition}

For the definition of a $q$-analog of a combinatorial design, the subset lattice on $\mathcal{P}$ is replaced by the subspace lattice of $V$.
In particular, subsets are replaced by subspaces and cardinality is replaced by dimension.
So we get \cite{Cam74,Cam74:a}:

\begin{definition}
Let $t,v,k$ be integers with $0 \leq t \leq k\leq v$, $\lambda$ another positive integer and $V$ an $\F_q$-vector space of dimension $v$.
A $t$-$(v,k,\lambda)_q$ \emph{design} $\D$ over the field $\F_q$ is a set of $k$-subspaces of $V$ (called \emph{blocks}) such that every $t$-subspace of $V$ is contained in exactly $\lambda$ blocks of $\D$.
When $\lambda = 1$, $\D$ is called a \emph{$q$-Steiner system} and denoted by $S_q[t,k,v]$. 
If additionally $t=2$ and $k=3$, $\D$ is called a \emph{$q$-Steiner triple system}.
\end{definition}

There are good reasons to consider the subset lattice as a subspace lattice over the unary \enquote{field} $\F_1$ \cite{Cohn-2004}.
Thus, classical combinatorial designs can be seen as the limit case $q=1$ of a design over a finite field.
Indeed, quite a few statements about combinatorial designs have a generalization to designs over finite fields, such that the case $q = 1$ reproduces the original statement \cite{BKKL,KL,KP14,NP14}.

One example of such a statement is the following \cite[Lemma~4.1(1)]{Suzuki-1990}:
If $\D$ is a $t$-$(v, k, \lambda_t)_q$ design, then $\D$ is also an $s$-$(v,k,\lambda_s)_q$ for all $s\in\{0,\ldots,t\}$, where
\[
\lambda_s := \lambda_t \frac{\gauss{v-s}{t-s}{q}}{\gauss{k-s}{t-s}{q}}.
\]
In particular, the number of blocks in $\D$ equals
\[
\#\D = \lambda_0 = \lambda_t \frac{\gauss{v}{t}{q}}{\gauss{k}{t}{q}}.
\]
So a necessary condition on the existence of a design with parameters $t$-$(v, k, \lambda)_q$ is that for all $s\in\{0,\ldots,t\}$ the numbers $\lambda \gauss{v-s}{t-s}{q}/\gauss{k-s}{t-s}{q}$ are integers (\emph{integrality conditions}).
In this case, the parameter set $t$-$(v,k,\lambda)_q$ is called \emph{admissible}.
It is further called \emph{realizable} if a $t$-$(v,k,\lambda)_q$ design actually exists.

For designs over finite fields, the action of $\Aut(\mathcal{L}(V)) \cong \PGammaL(V)$ on $\mathcal{L}(V)$ provides a notion of isomorphism.
Two designs in the same ambient space $V$ are called \emph{isomorphic} if they are contained in the same orbit of this action (extended to the power set of $\mathcal{L}(V)$).
The \emph{automorphism group} $\Aut(\D)$ of a design $\D$ is its stabilizer with respect to this group action.
If $\Aut(\D)$ is trivial, we will call $\D$ \emph{rigid}.
Furthermore, for $G \leq \PGammaL(V)$, $\D$ will be called $G$-invariant if it is fixed by all $g\in G$ or equivalently, if $G\leq \Aut(\D)$.
Note that if $\D$ is $G$-invariant, then $\D$ is also $H$-invariant for all subgroups $H \leq G$. 

\subsection{Admissibility and realizability}
The question of the realizability of an admissible parameter set is very hard to answer in general.
In the special case $t = 1$, $q$-Steiner systems are called \emph{spreads}.
It is known that the spread parameters $S_q[1,k,v]$ are realizable if and only if they are admissible if and only if $k$ divides $v$.
However for $t\geq 2$, the problem tantalized many researchers \cite{Beu78, BEO+, EV11,Met99,Tho87,Tho96}.
Only recently, the first example of such a $q$-Steiner system was constructed \cite{BEO+}.
It is a $q$-Steiner triple system with the parameters $S_2[2,3,13]$.

As a direct consequence of the integrality conditions, the parameter set of a Steiner triple system $S(2,3,v)$ or a $q$-Steiner triple system $S_q[2,3,v]$ is admissible if and only if $v\equiv 1,3\bmod 6$ and $v\geq 7$.
In the ordinary case $q=1$, the existence question is completely answered by the result that a Steiner triple system is realizable if and only if it is admissible \cite{Kirkman}.
However in the $q$-analog case, our current knowledge is quite sparse.
The only decided case is given by the above mentioned existence of an $S_2[2,3,13]$.

The smallest admissible case of a $q$-Steiner triple system is $S_q[2,3,7]$, whose existence is open for any prime power value of $q$.
It is known as a \emph{$q$-analog of the Fano plane}, since the unique Steiner triple system $S(2,3,7)$ is the Fano plane.
It is worth noting that there are cases of Steiner systems without a $q$-analog, as the famous large Witt design with parameters $S(5,8,24)$ does not have a $q$-analog for any prime power $q$ \cite{KL}. 

\subsection{The method of Kramer and Mesner}
The method of Kramer and Mesner \cite{KM76} is a powerful tool for the computational construction of combinatorial designs.
It has been successfully adopted and used for the construction of designs over a finite field \cite{MMY95,BKL05}.
For example, the hitherto only known $q$-analog of a Steiner triple system in \cite{BEO+} has been constructed by this method.
Here we give a short outline, for more details we refer the reader to \cite{BKL05}. 
As another computational construction method we mention the use of tactical decompositions~\cite{JT85}, which has been adopted to the $q$-analog case in~\cite{NP14}.

The \emph{Kramer-Mesner matrix} $\kmmat_{t,k}^G$ is defined to be the matrix whose rows and columns are indexed by the $G$-orbits on the set $\gauss{V}{t}{q}$ of $t$-subspaces and on the set $\gauss{V}{k}{q}$ of $k$-subspaces of $V$, respectively.
The entry of $\kmmat_{t,k}^G$ with row index $T^G$ and column index $K^G$ is defined as $\#\{K'\in K^G \mid T\subseteq K'\}$.
Now there exists a $G$-invariant $t$-$(v,k,\lambda)_q$ design if and only if there is a zero-one solution vector $\mathbf{x}$ of the linear equation system
\begin{equation}\label{eq:km}
\kmmat_{t,k}^{G}\mathbf{x}=\lambda \mathbf{1},
\end{equation} 
where $\textbf{1}$ denotes the all-one column vector. 
More precisely, if $\mathbf{x}$ is a zero-one solution vector of the system~\eqref{eq:km}, a $t$-$(v,k,\lambda)_q$ design is given by the union of all orbits $K^G$ where the corresponding entry in $\mathbf{x}$ equals one.
If $\mathbf{x}$ runs over all zero-one solutions, we get all $G$-invariant $t$-$(v,k,\lambda)_q$ designs in this way.

\subsection{Normal forms for square matrices}
Let $F$ be a field and $V$ a vector space over $F$ of finite dimension $v$.
Two elements of $\GL(V)$ are conjugate if and only if their transformation matrices are conjugate in the matrix group $\GL(v,F)$ if and only if the transformation matrices are similar in the sense of linear algebra.
If $F$ is algebraically closed, representatives of the matrices in $F^{v\times v}$ up to similarity are given by the Jordan normal forms.
In the following, we discuss two common normal forms for the more general case that $F$ is not algebraically closed, the Frobenius normal form and the generalized Jordan normal form.

For a monic polynomial
\[
    f = a_0 + a_1 X + \ldots + a_{n-1} X^{n-1} + X^n\in F[X]\text{,}
\]
the \emph{companion matrix} $A_f$ of $f$ is the $n\times n$ matrix over $F$ defined as
\[
	A_f
	= \begin{pmatrix}
	    0 & 1 & 0 & \cdots & 0 \\
	    \vdots & \ddots & \ddots & \ddots & \vdots \\
	    \vdots & & \ddots & \ddots & 0 \\
	    0 & \cdots & \cdots & 0 & 1 \\
	    -a_0 & -a_1 & -a_2 & \cdots & -a_{n-1}
	\end{pmatrix}\text{.}
\]

It is known that for any square matrix $A$ over $F$ there are unique monic \emph{invariant factors} $f_1 \mid \ldots \mid f_s \in F[X]$ such that $A$ is conjugate to a block diagonal matrix consisting of the blocks $A_{f_1},\ldots,A_{f_s}$.
This matrix is called the \emph{Frobenius normal form} or the \emph{rational normal form} of $A$.
The last invariant factor $f_s$ equals the minimal polynomial of $A$ and the product of all the invariant factors equals the characteristic polynomial of $A$.
The Frobenius normal form corresponds to a decomposition of $V$ into a minimum number of $A$-cyclic subspaces. (A subspace $U$ is called \emph{$A$-cyclic} if there exists a vector $\mathbf{v}\in V$ such that $U = \langle\mathbf{v}, A\mathbf{v}, A^2\mathbf{v},\ldots\rangle_F$.)

By further decomposing the invariant factors into powers of irreducible polynomials, one arrives at an alternative matrix normal form:
For a positive integer $m$ and a monic irreducible polynomial $f$, we define the \emph{Jordan block}
\[
	J_{f,m}
	= \begin{pmatrix}
		A_f & U & \mathbf{0} & \cdots & \mathbf{0} \\
		\mathbf{0}   & A_f & \ddots & \ddots & \vdots \\
		\vdots & \ddots & \ddots & \ddots & \mathbf{0} \\
		\vdots & & \ddots & A_f & U \\
		\mathbf{0} & \cdots & \cdots & \mathbf{0} & A_f
	\end{pmatrix}
\]
with $m$ consecutive blocks $A_f$ on the diagonal.
Here, $\mathbf{0}$ denotes the $n\times n$ zero matrix and $U$ denotes the $n\times n$ matrix whose only nonzero entry is an entry $1$ in the lower left corner.
Each square matrix over $F$ is conjugate to a \emph{(generalized) Jordan normal form}, which is a block diagonal matrix consisting of Jordan blocks $J_{g_1,m_1},\ldots,J_{g_s,m_s}$ with monic irreducible polynomials $g_i$.
Furthermore, the Jordan normal form is unique up to permuting the Jordan blocks.
The Jordan normal form corresponds to a decomposition of $V$ into a maximal number of $A$-invariant subspaces.
For that reason, the matrix blocks of the Jordan normal form are typically smaller than those in the Frobenius normal form.
Depending on the application, this might be an advantage.

\section{Automorphisms of order $2$ of binary $q$-Steiner triple systems}
\label{sect:triple}
In this section we prove Theorem~\ref{thm:aut2}, which is a result on the automorphisms of order $2$ of a binary $q$-Steiner triple system.

\begin{lemma}
	\label{lma:order2}
    In $\GL(v,2)$ there are exactly $\lfloor v/2\rfloor$ conjugacy classes of elements of order $2$.
    Representatives are given by the block-diagonal matrices $A_{v,s}$ with $s\in\{1,\ldots,\lfloor v/2\rfloor\}$, consisting of $s$ consecutive $2\times 2$ blocks $\left(\begin{smallmatrix}0 & 1\\1 & 0\end{smallmatrix}\right)$, followed by a $(v-2s)\times (v-2s)$ unit matrix.
\end{lemma}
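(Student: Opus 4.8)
The plan is to apply the normal-form theory over $\F_2$ recalled in the preliminaries. First I would determine the minimal polynomial: any $A\in\GL(v,2)$ of order $2$ satisfies $A^2 = I$, so over $\F_2$ its minimal polynomial divides $X^2 - 1 = (X+1)^2$ (here the characteristic-$2$ identity $-1 = 1$ is already in play). Since $A\neq I$, this minimal polynomial is not $X+1$, hence it equals $(X+1)^2$, and in particular the only irreducible polynomial dividing the characteristic polynomial is $X+1$.

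Next I would read off the Jordan normal form. With the single irreducible factor $X+1$ we have the $1\times 1$ companion matrix $A_{X+1} = (1)$, and the only admissible Jordan blocks are $J_{X+1,1} = (1)$ and $J_{X+1,2} = \left(\begin{smallmatrix}1 & 1\\0 & 1\end{smallmatrix}\right)$; no larger block can occur because the minimal polynomial has degree $2$. Hence $A$ is conjugate to a block-diagonal matrix formed from $s$ copies of $J_{X+1,2}$ and $v-2s$ copies of $(1)$. The condition $A\neq I$ forces $s\geq 1$ and the size constraint $2s\leq v$ forces $s\leq\lfloor v/2\rfloor$, so $s$ ranges exactly over $\{1,\ldots,\lfloor v/2\rfloor\}$.

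To match the representatives in the statement, I would check that the transposition block $\left(\begin{smallmatrix}0 & 1\\1 & 0\end{smallmatrix}\right)$ is conjugate in $\GL(2,2)$ to $J_{X+1,2}$: both are involutions with minimal polynomial $(X+1)^2$, so they share the same $2\times 2$ Jordan form. Applying this blockwise shows that $A_{v,s}$ is conjugate to the Jordan form with parameter $s$, so the $A_{v,s}$ are valid class representatives.

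Finally, distinctness of the $\lfloor v/2\rfloor$ classes follows from the uniqueness of the Jordan normal form up to reordering of blocks. The cleanest bookkeeping is through a conjugacy invariant: since $(A+I)^2 = A^2 + I = 0$ over $\F_2$, the matrix $A+I$ is nilpotent of index at most $2$, and one computes that its rank equals the number $s$ of size-$2$ blocks. As rank is conjugation-invariant and takes each value in $\{1,\ldots,\lfloor v/2\rfloor\}$ exactly once among the $A_{v,s}$, the count of conjugacy classes is precisely $\lfloor v/2\rfloor$. I do not anticipate a genuine obstacle here; the entire argument is a direct application of the normal-form machinery, and the only point requiring minor care is the repeated use of $-1 = 1$ in characteristic $2$, which is what collapses $X^2 - 1$ to $(X+1)^2$ and makes the transposition block an involution.
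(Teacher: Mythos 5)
Your proof is correct, but it runs through a different normal form than the paper does. The paper argues via the Frobenius normal form: an involution has minimal polynomial $X^2-1=(X+1)^2$, so its invariant factors are all $X+1$ or $(X+1)^2$ with the latter occurring $s\geq 1$ times, and the representatives fall out immediately because the companion matrix of $(X+1)^2 = X^2+1$ \emph{is} the transposition block $\left(\begin{smallmatrix}0 & 1\\1 & 0\end{smallmatrix}\right)$ --- no further conjugation is needed, and distinctness is absorbed into the uniqueness of the invariant factors. You instead use the generalized Jordan normal form, which produces blocks $J_{X+1,2}=\left(\begin{smallmatrix}1 & 1\\0 & 1\end{smallmatrix}\right)$, so you need the extra (correct) observation that $J_{X+1,2}$ and the transposition block are similar in $\GL(2,2)$, both being non-derogatory with the same minimal polynomial. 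What your route buys in exchange is an explicit and self-contained counting argument: the rank of $A+I$ is a conjugacy invariant equal to $s$ (each block of $A_{v,s}+I$ is $\left(\begin{smallmatrix}1 & 1\\1 & 1\end{smallmatrix}\right)$, of rank $1$), which cleanly separates the $\lfloor v/2\rfloor$ classes, whereas the paper leaves this implicit in the uniqueness statement for the Frobenius form. Both arguments are complete; the paper's is marginally shorter because its choice of normal form matches the stated representatives on the nose.
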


\begin{proof}
	Let $A\in\GL(v,2)$.
	The matrix $A$ is of order $2$ if and only if its minimal polynomial is $X^2 - 1 = (X+1)^2$.
	Equivalently, all the invariant factors of $A$ are $X+1$ or $(X+1)^2$, and the invariant factor $(X+1)^2$ must appear at least once.
	So for the number $s$ of the invariant factor $(X+1)^2$, there are the possibilities $s\in\{1,\ldots,\lfloor v/2\rfloor\}$.
	Representatives of the elements of order $2$ are now given by the associated Frobenius normal forms.
	The invariant factor $X+1$ gives $1\times 1$ blocks $(1)$, and the invariant factor $(X+1)^2 = X^2 + 1$ gives $2\times 2$ blocks $\left(\begin{smallmatrix}0 & 1\\1 & 0\end{smallmatrix}\right)$.
	By putting the $1\times 1$ blocks at the end rather than at the beginning, we attain the matrices $A_{v,s}$.
\end{proof}

For a matrix $A$ of order $2$, the unique conjugate $A_{v,s}$ given by Lemma~\ref{lma:order2} will be called the \emph{type} of $A$.
If $A$ is an automorphism of a design $\mathcal{D}$, by equation~\eqref{eq:stab_conj} there is an isomorphic design having the automorphism $A_{v,s}$.
Therefore, for the investigation of the action of $\langle A\rangle$ on $\mathcal{D}$, we may assume $A = A_{v,s}$ without loss of generality.

\begin{lemma}
	\label{lem:fixp}
	Let $A$ be a matrix of order $2$ and type $A_{v,s}$.
	The action of $\langle A\rangle$ partitions the point set $\gauss{\F_2^v}{1}{q}$ into $2^{v-s} - 1$ fixed points and $2^{v-s-1} (2^s - 1)$ orbits of length $2$.
\end{lemma}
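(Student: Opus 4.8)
The plan is to reduce everything to a single dimension count for the fixed space of $A$. Since conjugate matrices induce the same orbit-length distribution on the point set — conjugation by $g\in\GL(v,2)$ permutes the points of $\gauss{\F_2^v}{1}{2}$ and intertwines the two $\langle A\rangle$-actions — I may assume $A = A_{v,s}$ exactly. Over $\F_2$ every point corresponds to a unique nonzero vector $\mathbf{x}$, and because $A$ is invertible we have $\mathbf{x}A\neq\mathbf{0}$; hence $\langle\mathbf{x}\rangle$ is fixed by $A$ if and only if $\mathbf{x}A\in\langle\mathbf{x}\rangle=\{\mathbf0,\mathbf x\}$, which together with $\mathbf{x}A\neq\mathbf0$ forces $\mathbf{x}A=\mathbf{x}$. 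Thus the fixed points are in bijection with the nonzero vectors of the fixed space $\Fix(A)=\{\mathbf x\in\F_2^v : \mathbf x A=\mathbf x\}$, and every non-fixed point lies in an orbit of length $2$, since $\langle A\rangle$ has order $2$ and by Orbit-Stabilizer the orbit lengths divide the group order.

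First I would determine $\dim\Fix(A)$ from the block structure of $A_{v,s}$. Over $\F_2$ the matrix $A_{v,s}-I$ is block-diagonal with $s$ blocks $\left(\begin{smallmatrix}1&1\\1&1\end{smallmatrix}\right)$ followed by a $(v-2s)\times(v-2s)$ zero block. A row vector $(a,b)$ satisfies $(a,b)\left(\begin{smallmatrix}1&1\\1&1\end{smallmatrix}\right)=(a+b,a+b)=\mathbf0$ exactly when $a=b$, so each $2\times2$ block contributes a $1$-dimensional fixed space, while the trailing identity part fixes every vector and contributes a $(v-2s)$-dimensional fixed space. Hence
\[
\dim\Fix(A)=s\cdot 1+(v-2s)=v-s,
\]
and the number of fixed points equals $2^{v-s}-1$.

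It then remains to count the orbits of length $2$. The point set has $2^v-1$ elements, of which $2^{v-s}-1$ are fixed; the remaining $2^v-2^{v-s}$ points are partitioned into orbits of length $2$, giving
\[
\tfrac12\bigl(2^v-2^{v-s}\bigr)=2^{v-s-1}(2^s-1)
\]
such orbits, as claimed. I expect no genuine obstacle beyond the fixed-space computation: once the dimension $v-s$ of $\Fix(A)$ is pinned down, the result is a clean count of nonzero vectors. The only point needing a word of care is the passage from \emph{fixed vector} to \emph{fixed point}, which over $\F_2$ is immediate because the sole nonzero scalar is $1$.
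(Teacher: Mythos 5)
Your proof is correct and follows essentially the same route as the paper's: reduce to $A = A_{v,s}$ by conjugacy, identify the fixed points with the nonzero vectors satisfying $x_1=x_2,\ldots,x_{2s-1}=x_{2s}$ (your kernel computation for $A_{v,s}-I$ is just this condition phrased as a dimension count), and split the remaining $2^v-2^{v-s}$ points into orbits of length $2$ via the Orbit-Stabilizer Theorem. Your explicit treatment of the passage from fixed vectors to fixed points is a small point of extra care that the paper leaves implicit, but the argument is the same.
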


\begin{proof}
Me may assume $A = A_{v,s}$.
By the Orbit-Stabilizer Theorem and $\#\langle A_{v,s}\rangle = 2$, all orbits are of length $1$ or $2$.
From the explicit form of the matrices $A_{v,s}$, it is clear that a vector $\mathbf{x} = (x_1,x_2,\ldots,x_v)\in\F_2^v$ is fixed by $A_{v,s}$ if and only if
\[
    x_1 = x_2\text{,}\quad x_3 = x_4\text{,}\ldots\text{,}\quad x_{2s - 1} = x_{2s}\text{.}
\]
Thus, each of these $2^{v-s} - 1$ vectors forms an orbit of length $1$, and the remaining $(2^v - 1) - (2^{v-s} - 1) = 2\times 2^{v-s-1} (2^s - 1)$ vectors are paired into $2^{v-s-1}(2^s - 1)$ orbits of length $2$.
\end{proof}

\begin{example}
	\label{ex:con2:v3}
	A model of the classical Fano plane is given by the points and the planes in $\PG(2,2) = \PG(\F_2^3)$.
	Its automorphism group is $\GL(3,2)$.
	By Lemma~\ref{lma:order2}, there is a single conjugacy class of automorphism types of order $2$, represented by the matrix
	\[
		A_{3,1} = \begin{pmatrix}
			0 & 1 & 0 \\
			1 & 0 & 0 \\
			0 & 0 & 1
		\end{pmatrix}\text{.}
	\]
	The action of $\langle A_{3,1}\rangle$ partitions the point set $\gauss{\F_2^3}{1}{2}$ into the $3$ fixed points
	\[
		\langle(0, 0, 1)\rangle_{\F_2}\text{,}\quad
		\langle(1, 1, 0)\rangle_{\F_2}\text{,}\quad
		\langle(1, 1, 1)\rangle_{\F_2}\text{,}
	\]
	and the two orbits of length $2$
	\[
		\{
		\langle(1, 0, 0)\rangle_{\F_2},\;
		\langle(0, 1, 0)\rangle_{\F_2}
		\}
		\quad\text{and}\quad
		\{
		\langle (1, 0, 1)\rangle_{\F_2},\;
		\langle (0, 1, 1)\rangle_{\F_2}
		\}
		\text{.}
	\]
\end{example}

\begin{example}
	\label{ex:con2:v7}
	By Lemma~\ref{lma:order2}, the conjugacy classes of elements of order $2$ in $\GL(7,2)$ are represented by
\begin{align*}
A_{7,1}
& = \left(\begin{smallmatrix}
    0& 1& 0& 0& 0& 0& 0 \\
    1& 0& 0& 0& 0& 0& 0 \\
    0& 0& 1& 0& 0& 0& 0 \\
    0& 0& 0& 1& 0& 0& 0 \\
    0& 0& 0& 0& 1& 0& 0 \\
    0& 0& 0& 0& 0& 1& 0 \\
    0& 0& 0& 0& 0& 0& 1 
\end{smallmatrix}\right)\text{,}
& A_{7,2}
& = \left(\begin{smallmatrix}
    0& 1& 0& 0& 0& 0& 0 \\
    1& 0& 0& 0& 0& 0& 0 \\
    0& 0& 0& 1& 0& 0& 0 \\
    0& 0& 1& 0& 0& 0& 0 \\
    0& 0& 0& 0& 1& 0& 0 \\
    0& 0& 0& 0& 0& 1& 0 \\
    0& 0& 0& 0& 0& 0& 1 
\end{smallmatrix}\right)\text{,}
& A_{7,3}
& = \left(\begin{smallmatrix}
    0& 1& 0& 0& 0& 0& 0 \\
    1& 0& 0& 0& 0& 0& 0 \\
    0& 0& 0& 1& 0& 0& 0 \\
    0& 0& 1& 0& 0& 0& 0 \\
    0& 0& 0& 0& 0& 1& 0 \\
    0& 0& 0& 0& 1& 0& 0 \\
    0& 0& 0& 0& 0& 0& 1 
\end{smallmatrix}\right)\text{.}
\end{align*}
The number of fixed points is $63$, $31$ and $15$, and the number of orbits of length~$2$ is $32$, $48$ and $56$, respectively.
\end{example}

Now we investigate the automorphisms of order $2$ of an $S_2[2,3,v]$ $q$-Steiner triple system $\D$.
For the remainder of the article, we will assume that $V = \F_2^v$.
This allows us to identify $\GL(V)$ with the matrix group $\GL(v,2)$.

\begin{lemma}\label{lem:fix_2}
Let $\D$ be an $S_2[2,3,v]$ $q$-Steiner system with an automorphism $A$ of order $2$ and type $A_{v,s}$.
Then each block of $\D$ fixed under the action of $\langle A\rangle$ contains either $3$ or $7$ fixed points.
The number of fixed blocks of the first type is
\begin{equation}\label{eq:f3}
F_3 = 2^{v-s-2}(2^s - 1)
\end{equation}
and the number of fixed blocks of the second type is
\begin{equation}\label{eq:f7}
F_7 = \frac{2^{2v-2s-1}+1-3\cdot 2^{v-s-2}(2^s+1)}{21}\text{.}
\end{equation}
\end{lemma}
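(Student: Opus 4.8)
The plan is to reduce the statement to two linear equations in the unknowns $F_3$ and $F_7$ and solve. Throughout I may assume $A = A_{v,s}$, and I write $W = \{\mathbf{x}\in\F_2^v : A\mathbf{x} = \mathbf{x}\}$ for the fixed subspace, which has dimension $v-s$ by the description in Lemma~\ref{lem:fixp}; the fixed points of $\langle A\rangle$ are exactly the points of $W$.

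First, the dichotomy. Let $B$ be a block fixed by $\langle A\rangle$, so $B$ is an $A$-invariant plane and the restriction $A|_B$ is an involution (possibly the identity) of $\F_2^3\cong B$. Since we work in characteristic $2$, $(A|_B + I)^2 = A|_B^2 + I = 0$, so $A|_B + I$ is nilpotent of index $\le 2$; its image therefore lies in its kernel $B\cap W$, and rank-nullity forces $\dim(B\cap W)\ge 2$. As $\dim(B\cap W)\le 3$, the block $B$ contains either $2^2-1 = 3$ or $2^3-1 = 7$ fixed points, which is the first assertion.

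Next I set up two counting identities, both exploiting the crucial observation that \emph{every $A$-invariant line lies in a fixed block}: if $A(L) = L$, then the unique block $B\supseteq L$ satisfies $A(B)\supseteq A(L) = L$, so $A(B) = B$ by uniqueness. I would count the lines inside a single fixed block $B$ in two ways, according to $\dim(B\cap W)\in\{2,3\}$. The fixed-point lines of $B$ (those contained in $W$) number $\gauss{\dim(B\cap W)}{2}{2}$, i.e.\ exactly $1$ for a type-$3$ block and $7$ for a type-$7$ block. For the $A$-invariant lines of $B$, observe that each invariant line meets $W$ in a fixed point and is either contained in $W$ or has its remaining two points forming a $2$-orbit; conversely each $2$-orbit of points in $B$ spans an invariant line, whose third point $\langle(A+I)\mathbf{b}\rangle$ is a (nonzero) fixed point because $(A+I)\mathbf{b}\in\operatorname{im}(A+I)\subseteq\ker(A+I) = W$. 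This gives a bijection between invariant lines outside $W$ and $2$-orbits of points, so a type-$3$ block (with $3$ fixed and $(7-3)/2 = 2$ paired points) carries $1+2 = 3$ invariant lines, while a type-$7$ block carries $7$. Summing these per-block counts over all fixed blocks, and using that each fixed-point (resp.\ $A$-invariant) line of $\F_2^v$ lies in exactly one block, which is fixed, yields
\begin{align}
\gauss{v-s}{2}{2} &= 7 F_7 + F_3, \label{plan:eq1}\\
N &= 7 F_7 + 3 F_3, \label{plan:eq2}
\end{align}
where $N$ is the total number of $A$-invariant lines of $\F_2^v$. The same line/orbit dichotomy applied globally gives $N = \gauss{v-s}{2}{2} + 2^{v-s-1}(2^s-1)$, the last summand being the number of $2$-orbits of points from Lemma~\ref{lem:fixp}.

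Subtracting \eqref{plan:eq1} from \eqref{plan:eq2} gives $2F_3 = N - \gauss{v-s}{2}{2} = 2^{v-s-1}(2^s-1)$, hence \eqref{eq:f3}; substituting back into \eqref{plan:eq1} and expanding $\gauss{v-s}{2}{2} = (2^{v-s}-1)(2^{v-s-1}-1)/3$ yields \eqref{eq:f7} after routine simplification. The main obstacle I anticipate is the per-block bookkeeping — in particular, verifying cleanly that the correspondence between invariant lines not contained in $W$ and $2$-orbits of points is a genuine bijection (well-definedness of the third point as a \emph{fixed} point, nonvanishing of $(A+I)\mathbf{b}$, and injectivity), both locally inside each fixed block and globally on $\F_2^v$. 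Once this bijection and the implication ``$A$-invariant line $\Rightarrow$ fixed block'' are secured, the two equations and the final algebra are straightforward.
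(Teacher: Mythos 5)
Your proof is correct and follows essentially the same route as the paper: the same $3$-or-$7$ dichotomy for fixed blocks, the same key observation that an $A$-invariant line lies in a fixed block (forced by $\lambda = 1$), and two incidence counts that are linear recombinations of the paper's two double counts. Concretely, your count of lines inside $W$ is one third of the paper's count of pairs of distinct fixed points (each line of $W$ carries three such pairs), the difference of your two equations is exactly the paper's count of incidences between point-orbits of length $2$ and fixed blocks, and your only other departure is cosmetic: you establish the dichotomy by rank--nullity for the nilpotent map $A|_B + I$ instead of citing the classification of involutions of $\mathcal{L}(\F_2^3)$ in Example~\ref{ex:con2:v3}.
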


\begin{proof}
Without restriction $A = A_{v,s}$.
Let $\Fix$ denote the set of blocks of $\D$ fixed by $G = \langle A_{v,s}\rangle$. 
The restriction of $A_{v,s}$ to any fixed block $K\in\Fix$ is an automorphism of $\mathcal{L}(K)\cong \mathcal{L}(\F_2^3)$ whose order divides $2$.
By Example~\ref{ex:con2:v3}, the restriction is either of the unique conjugacy type of order $2$ or the identity.
So the number of fixed points in $K$ is either $3$ or $7$.
Let $\Fix_3 \subseteq \Fix$ denote the set of all fixed blocks with $3$ fixed points and $\Fix_7$ those having $7$ fixed points.

We double count the set $X$ of all pairs $(\{P_1,P_2\},B)$ where $\{P_1,P_2\}$ is a point-orbit of length $2$ and $B$ is a fixed block of $\D$ passing through $P_1$ and $P_2$.
By Lemma~\ref{lem:fixp}, the number of choices for $\{P_1,P_2\}$ is $2^{v-s-1}(2^s - 1)$.
By the design property, there is exactly $\lambda = 1$ block $B$ of $\D$ passing through the line $L = \langle P_1,P_2\rangle_{\F_2}$.
Since $\{P_1,P_2\}$ is fixed under the action of $A_{v,s}$, so is $L$.
This implies that every block $B'$ in the orbit of $B$ passes through $L$, too.
By $\lambda = 1$, we get $B' = B$, so $B$ is a fixed block.
Thus $\#X = 2^{v-s-1}(2^s - 1)$.

Now we first count the possibilities for the fixed block $B$.
Since $B$ contains a point-orbit of length $2$, necessarily $B\in\Fix_3$.
By Example~\ref{ex:con2:v3}, each such $B$ contains exactly two point-orbits of length $2$.
So $\#X = 2F_3$, which verifies equation~\eqref{eq:f3}.

Now we double count the set $Y$ of all pairs $(\{P_1,P_2\},B)$ where $P_1,P_2$ are two distinct fixed points and $B$ is a fixed block of $\D$ passing through $P_1$ and $P_2$.
By Lemma~\ref{lem:fixp}, the number of choices for $\{P_1,P_2\}$ is $\binom{2^{v-s}-1}{2} = (2^{v-s} - 1)(2^{v-s-1} - 1)$.
As above, there is a single fixed block of $\D$ passing through $P_1$ and $P_2$, which yields $\#Y = (2^{v-s} - 1)(2^{v-s-1} - 1)$.

Vice versa, for $B\in\mathcal{F}_3$ there are $\binom{3}{2} = 3$ choices for $\{P_1,P_2\}$ and for $B\in\mathcal{F}_7$ the number of choices is $\binom{7}{2} = 21$.
So $\#Y = 3F_3 + 21 F_7$.
This shows that $(2^{v-s} - 1)(2^{v-s-1} - 1) = 3F_3 + 21 F_7$.
Replacing $F_3$ by formula~\eqref{eq:f3}, we obtain formula~\eqref{eq:f7}.
\end{proof}

\begin{corollary}
\label{cor:ord2}
Let $\D$ be an $S_2[2,3,v]$ $q$-Steiner system.
\begin{enumerate}[(a)]
	\item In the case $v \equiv 1\bmod 6$, $\D$ does not have an automorphism of type $A_{v,s}$ with $3\nmid s$.
	\item In the case $v \equiv 3\bmod 6$, $\D$ does not have an automorphism of type $A_{v,s}$ with $s\equiv 2\bmod 3$.
\end{enumerate}
\end{corollary}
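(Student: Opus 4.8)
The plan is to exploit the fact that $F_7$ in~\eqref{eq:f7} counts actual blocks of $\D$, so whenever $\D$ possesses an automorphism of type $A_{v,s}$, the right-hand side of~\eqref{eq:f7} must be a non-negative integer. The whole argument then reduces to showing that, for the forbidden values of $s$, the numerator
\[
N = 2^{2v-2s-1}+1-3\cdot 2^{v-s-2}(2^s+1)
\]
is \emph{not} divisible by $21 = 3\cdot 7$; the resulting non-integrality of $F_7$ contradicts Lemma~\ref{lem:fix_2} and thereby forbids such an automorphism.

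First I would dispose of the factor $3$. Since the summand $3\cdot 2^{v-s-2}(2^s+1)$ is visibly a multiple of $3$, modulo $3$ we have $N\equiv 2^{2v-2s-1}+1$. The exponent $2v-2s-1$ is odd, and $2\equiv -1\pmod 3$, so $2^{2v-2s-1}\equiv -1\pmod 3$ and hence $N\equiv 0\pmod 3$ for all admissible $v$ and $s$. Thus divisibility by $3$ is automatic, and the only real constraint is divisibility by $7$.

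The key manipulation for the mod-$7$ analysis is to split the last term as $3\cdot 2^{v-s-2}(2^s+1) = 3\cdot 2^{v-2} + 3\cdot 2^{v-s-2}$, using $2^{v-s-2}\cdot 2^s = 2^{v-2}$, so that
\[
N = 2^{2v-2s-1} + 1 - 3\cdot 2^{v-2} - 3\cdot 2^{v-s-2}\text{.}
\]
Since $2$ has multiplicative order $3$ modulo $7$, every power of $2$ appearing here is determined by its exponent modulo $3$, and each such exponent is in turn determined by $v\bmod 3$ and $s\bmod 3$. Hence $N\bmod 7$ depends only on the residues of $v$ and $s$ modulo $3$, which I would record in a short case distinction. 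For part~(a) we have $v\equiv 1\pmod 6$, so $v\equiv 1\pmod 3$; evaluating the three possibilities $s\equiv 0,1,2\pmod 3$ gives $N\equiv 0,1,1\pmod 7$, respectively. Thus $7\mid N$ exactly when $3\mid s$, and for $3\nmid s$ the count $F_7$ fails to be an integer, the desired contradiction. For part~(b) we have $v\equiv 3\pmod 6$, so $v\equiv 0\pmod 3$; the same evaluation for $s\equiv 0,1,2\pmod 3$ yields $N\equiv 0,0,6\pmod 7$, so $7\mid N$ exactly when $s\nequiv 2\pmod 3$, ruling out the types with $s\equiv 2\pmod 3$.

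I do not expect a genuine obstacle here: once~\eqref{eq:f7} is available, the argument is elementary modular arithmetic. The only point requiring care is the bookkeeping of the exponents modulo $3$ — in particular recognising the simplification $2^{v-s-2}\cdot 2^s=2^{v-2}$, which decouples the otherwise mixed dependence on $v$ and $s$ and makes the final table completely routine.
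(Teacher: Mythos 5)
Your proposal is correct and follows essentially the same route as the paper: both arguments rest on the integrality of $F_7$ from Lemma~\ref{lem:fix_2}, reduce the numerator of~\eqref{eq:f7} modulo $7$ using that $2$ has multiplicative order $3$ modulo $7$, and arrive at the same residue table ($0,1,1$ for $v\equiv 1$ and $0,0,-1$ for $v\equiv 3$). Your additional observation that divisibility by $3$ holds automatically is a harmless (and logically unnecessary) bonus, since non-divisibility by $7$ already suffices for the contradiction.
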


\begin{proof}
By definition, the number $F_7$ of Lemma~\ref{lem:fix_2} is an integer.
So the numerator of the right hand side of \eqref{eq:f7} must be a multiple of $7$.

We compute its remainder modulo $7$.
The multiplicative order of $2$ modulo $7$ equals $3$, so the remainder only depends on the values of $v$ and $s$ modulo $3$.
We get:
\[
	\begin{array}{c|ccc}
	& s\equiv 0\bmod 3 & s\equiv 1\bmod 3 & s\equiv 2\bmod 3\\
	\hline
	v\equiv 1\bmod 6 & 0 & 1 & 1 \\
	v\equiv 3\bmod 6 & 0 & 0 & -1
	\end{array}
\]
This concludes the proof.
\end{proof}

Now Theorem~\ref{thm:aut2}, which was stated in Section~\ref{sect:introduction}, follows as a combination of Corollary~\ref{cor:ord2} and Lemma~\ref{lma:order2}.

\section{Automorphisms of a binary $q$-analog of the Fano plane}
\label{sect:autfano}
This section is dedicated to the proof of Theorem~\ref{tm:main}.
We prove this assertion by eliminating all other subgroups of $\GL(7,2)$.
This elimination is obtained by a combination of the theoretical results of Section~\ref{sect:triple} and a computer aided search based on the Kramer-Mesner method.

In the following, we shall denote by $\D$ a putative $S_2[2,3,7]$ $q$-Steiner system.
Its automorphism group $\Aut(\D)$ is a subgroup of the group $\GL(7,2) \cong \PGammaL(\F_2^7)$ which has the order
\begin{equation}
\label{eq:ordgl27}
\#\GL(7,2) = 2^{21}\cdot 3^4\cdot 5^1\cdot 7^2\cdot 31^1\cdot 127^1\text{.}
\end{equation}
According to the Sylow theorems, for each prime power $p^r$ with $p^r \mid \#\Aut(\D)$ there exists a subgroup $G \le \GL(7,2)$ such that $\D$ is $G$-invariant.
Hence for each prime factor $p \in \{2,3,5,7,31,127\}$, we strive to find a (preferably small) exponent $r$ such that there is no subgroup $G \le \GL(7,2)$ of order $p^r$ admitting a $G$-invariant $S_2[2,3,7]$ Steiner system.
Then, we can conclude that $\Aut(\D)$ is not divisible by $p^r$.

Since the automorphism groups of isomorphic $q$-Steiner systems are conjugate, it is sufficient to restrict the search to representatives of the subgroups of $\GL(7,2)$ up to conjugacy.
Representatives of the conjugacy classes of the elements are given by the Jordan normal forms (with a fixed order of the Jordan blocks).
This provides an efficient way to create the cyclic subgroups up to conjugacy.%
\footnote{We remark that two non-conjugate elements may generate conjugate subgroups.}
For general subgroup generation, the software package Magma is used \cite{BCP97}.

For all the cases where subgroups were excluded computationally using the Kramer-Mesner method, details are given in Table~\ref{tbl:km}.
The column \enquote{group} contains the group in question.
Its isomorphism type as an abstract group is described in column \enquote{type}.
The columns \enquote{$T$-orb} and \enquote{$K$-orb} contain information on the induced partition of the $2$-subsets and $3$-subsets, respectively.
For example, the entry $4^{644} 2^{42} 1^{7}$ for the group $G_{4,1}$ means that $\gauss{\F_2^7}{2}{2}$ is partitioned into $644$ orbits of length $4$, $42$ orbits of length $2$, and $7$ orbits of length $1$.
If a column of the Kramer-Mesner matrix $\kmmat_{t,k}^G$ contains an entry $> \lambda = 1$, then the corresponding orbit $K^G$ cannot be contained in $\D$.
So we can remove all those columns and the respective variables from the equation system.
The orbit lengths and the size of the reduced system are given in the table columns \enquote{red. $K$-orb} and \enquote{size}.
The reduced system is fed into a solver based on the dancing links algorithm \cite{Knu00}, which is executed on a single core of an Intel Xeon E3-1275 V2 CPU.
The resulting computation time to show the insolvability of the system is given in the last table column \enquote{runtime}.
An entry \enquote{\emph{open}} means that the solver did not terminate within the time limit of a few days.
There are two possibilities where the insolvability can be seen immediately without the need to run the solver:
If the reduced system remains with a zero row (indicated by \enquote{\emph{zero row!}}) or if it is impossible to get the size $\#\D = 381$ as a sum of the lengths of the $K$-orbits (indicated by \enquote{\emph{orbits!}}).

\subsection{Groups of order $2^r$}
From Theorem~\ref{thm:aut2}, we get:

\begin{lemma}\label{lem:2}
If $\D$ is invariant under an automorphism group $G$ of order $2$, then $G$ is conjugate to the cyclic group
\[
	G_2
	= \langle A_{7,3}\rangle
	= \left\langle\left(
	\begin{smallmatrix}
	0& 1& 0& 0& 0& 0& 0 \\
	1& 0& 0& 0& 0& 0& 0 \\
	0& 0& 0& 1& 0& 0& 0 \\
	0& 0& 1& 0& 0& 0& 0 \\
	0& 0& 0& 0& 0& 1& 0 \\
	0& 0& 0& 0& 1& 0& 0 \\
	0& 0& 0& 0& 0& 0& 1 
	\end{smallmatrix}\right)\right\rangle\text{.}
\]
\end{lemma}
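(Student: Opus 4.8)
The plan is to deduce the statement directly from the classification of order-$2$ automorphisms obtained in Theorem~\ref{thm:aut2}, specialized to the parameter $v = 7$. First I would observe that any subgroup $G \le \GL(7,2)$ of order $2$ is cyclic, generated by a single element $A$ of order $2$; since $\D$ is assumed to be $G$-invariant, this $A$ is an automorphism of $\D$ of order $2$. The whole question thus reduces to pinning down which types of $A$ can occur.

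Next I would invoke Lemma~\ref{lma:order2}: in $\GL(7,2)$ the elements of order $2$ fall into exactly $\lfloor 7/2\rfloor = 3$ conjugacy classes, with types $A_{7,1}$, $A_{7,2}$ and $A_{7,3}$. Because $v = 7 \equiv 1\bmod 6$, Theorem~\ref{thm:aut2}(a) (equivalently Corollary~\ref{cor:ord2}(a)) forces the type of $A$ to be $A_{7,s}$ with $3\mid s$. Among the three admissible values $s\in\{1,2,3\}$ this divisibility condition leaves only $s = 3$, eliminating the types $A_{7,1}$ and $A_{7,2}$ and forcing $A$ to be of type $A_{7,3}$.

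Finally I would promote conjugacy of the generator to conjugacy of the group. Writing $A = g^{-1} A_{7,3}\, g$ for a suitable $g\in\GL(7,2)$, we obtain
\[
	G = \langle A\rangle = g^{-1}\langle A_{7,3}\rangle\, g = g^{-1} G_2\, g\text{,}
\]
so $G$ is conjugate to $G_2$, as claimed.

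As for the difficulty: essentially all the work has already been carried out in Section~\ref{sect:triple}, where the double-counting argument of Lemma~\ref{lem:fix_2} together with the modular analysis of Corollary~\ref{cor:ord2} rules out the types with $3\nmid s$. The present lemma is therefore little more than a specialization to $v = 7$, and I expect no genuine obstacle. The only point that calls for a moment's care is the (routine) passage from a conjugate generator to a conjugate group, which is immediate from the relation~\eqref{eq:stab_conj} between stabilizers of conjugate elements.
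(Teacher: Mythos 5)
Your proof is correct and follows exactly the paper's route: the paper derives Lemma~\ref{lem:2} as an immediate consequence of Theorem~\ref{thm:aut2} specialized to $v=7\equiv 1\bmod 6$, which leaves $s=3$ as the only admissible type among $s\in\{1,2,3\}$. The additional details you supply (cyclicity of a group of order $2$, and the passage from a conjugate generator to a conjugate subgroup) are precisely the steps the paper leaves implicit.
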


\begin{remark}
We attempted to construct a $G_2$-invariant $q$-Steiner triple system $S_2[2,3,7]$ by the Kramer-Mesner method.
The resulting reduced equation system matrix has size $1379 \times 4947$.
Furthermore, from Lemma~\ref{lem:fix_2} we know that $\D$ has exactly $29$ blocks fixed by $G_2$.
However, even with these constraints, the Kramer-Mesner system turned out to be too large to be solved in a reasonable amount of time.
\end{remark}

\begin{lemma}\label{lem:4}
If $\D$ is invariant under a group $G$ of order $4$, then $G$ is a conjugate to the cyclic group
\[
G_{4,1} =
\left\langle
\left(
\begin{smallmatrix}
1&1&0&0&0&0&0 \\
0&1&1&0&0&0&0 \\
0&0&1&0&0&0&0 \\
0&0&0&1&1&0&0 \\
0&0&0&0&1&1&0 \\
0&0&0&0&0&1&1 \\
0&0&0&0&0&0&1
\end{smallmatrix}\right)
\right\rangle
\text{.}
\]
\end{lemma}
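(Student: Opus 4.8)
The plan is to argue according to the abstract isomorphism type of $G$. A group of order $4$ is either cyclic, $G\cong C_4$, or a Klein four group, $G\cong C_2\times C_2$. In both cases every involution $B\in G$ is an order-$2$ automorphism of $\D$, so Lemma~\ref{lem:2} applies and forces $B$ to be of type $A_{7,3}$. Writing $B=I+M$ over $\F_2$, the relation $B^2=I$ reads $M^2=0$, and being of type $A_{7,3}$ is equivalent to $\operatorname{rank} M=3$, since each block $\left(\begin{smallmatrix}0&1\\1&0\end{smallmatrix}\right)$ of $A_{v,s}$ contributes a rank-$1$ summand to $A_{v,s}-I$, giving $\operatorname{rank}(A_{v,s}-I)=s$. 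This single observation drives both cases.

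For the \emph{cyclic case}, write $G=\langle A\rangle$ with $A$ of order $4$. Over $\F_2$ the matrix $A$ is unipotent, so $N:=A-I$ is nilpotent and the conjugacy class of $A$ is determined by its Jordan type, a partition $\lambda$ of $7$ with all blocks of eigenvalue $1$. Having order exactly $4$ amounts to $(A-I)^4=0$ but $(A-I)^2\neq 0$, i.e.\ the largest part of $\lambda$ lies in $\{3,4\}$. Moreover $A^2=I+N^2$, so the involution $A^2$ has $\operatorname{rank}(A^2-I)=\operatorname{rank}(N^2)=\sum_i\max(\lambda_i-2,0)$, which by the reduction above must equal $3$. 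A short enumeration of the partitions of $7$ shows that $\lambda=(4,3)$ is the only one meeting both constraints. Since the Jordan type determines the $\GL(7,2)$-conjugacy class, and the generator of $G_{4,1}$ has exactly this type (a Jordan block of size $3$ and one of size $4$), $A$ is conjugate to that generator and hence $G$ is conjugate to $G_{4,1}$.

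The \emph{Klein four case} must be eliminated entirely, as $G_{4,1}$ is cyclic. The plan here is computational but guided by the reduction. By Lemma~\ref{lem:2} we may conjugate so that one of the three involutions equals $A_{7,3}$; the remaining two then lie in the centralizer $C_{\GL(7,2)}(A_{7,3})$ and must themselves be of type $A_{7,3}$, with their product again of type $A_{7,3}$. Module-theoretically, an invariant $\D$ makes $\F_2^7$ a $7$-dimensional $\F_2[C_2\times C_2]$-module on which all three nonzero group elements act with $\operatorname{rank}(\,\cdot\,-I)=3$. As $\F_2$ is finite there are only finitely many such modules up to isomorphism, hence only finitely many conjugacy classes of candidate groups, which are enumerated with the help of Magma. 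This family is nonempty: for instance the direct sum of the regular $\F_2[C_2\times C_2]$-module (dimension $4$, on which all three nonidentity elements act with rank $2$) with the $3$-dimensional module on which the two involutions act by $a\mapsto a+b$ and $a\mapsto a+c$ while fixing $b$ and $c$ already realizes all three ranks equal to $3$. Thus a purely theoretical exclusion via rank counting is not available; instead each candidate is discarded by the Kramer-Mesner method, forming $\kmmat_{2,3}^{G}$, deleting the columns with an entry exceeding $\lambda=1$, and showing the reduced system has no $0/1$ solution, either by a direct obstruction (a zero row, or the impossibility of writing $\#\D=381$ as a sum of orbit lengths) or by the exact solver; the data are collected in Table~\ref{tbl:km}.

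The genuinely easy part is the cyclic case, settled by the Jordan-type computation with no recourse to the computer. The main obstacle is the Klein four case: one must be certain to have generated \emph{all} conjugacy classes of Klein four subgroups whose involutions are simultaneously of type $A_{7,3}$ — it is the module-theoretic picture that guarantees this list is complete and finite — and then the Kramer-Mesner elimination must actually terminate for every one of them. The existence of the explicit candidate above is precisely what shows no free theoretical shortcut exists, so the computation cannot be avoided.
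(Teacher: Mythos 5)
Your proposal is correct, and its overall skeleton matches the paper's proof: constrain all involutions to type $A_{7,3}$ via Lemma~\ref{lem:2}, identify the unique surviving cyclic class, and eliminate the surviving Klein four groups computationally by Kramer--Mesner. The genuine difference is the cyclic case. The paper obtains $G_{4,1}$ by having Magma enumerate all $42$ conjugacy classes of order-$4$ subgroups ($7$ cyclic, $35$ non-cyclic) and discarding those containing an involution of excluded type; you instead derive the candidate by hand: an order-$4$ element of $\GL(7,2)$ is unipotent with Jordan type a partition of $7$ whose largest part is $3$ or $4$, and the condition $\operatorname{rank}(A^2-I)=\operatorname{rank}(N^2)=\sum_i\max(\lambda_i-2,0)=3$ forced by Lemma~\ref{lem:2} leaves only $\lambda=(4,3)$, the Jordan type of the generator of $G_{4,1}$. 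I checked the seven admissible partitions; this is right, and it buys a transparent, computer-free identification of the cyclic class, whereas the paper's uniform enumeration buys the explicit generators of the seven Klein four candidates $G_{4,2},\ldots,G_{4,8}$ that the solver needs. Where it matters, your proof rests on the same computations: the Magma enumeration of Klein four candidates (your module-theoretic finiteness argument is sound, though module isomorphism classes correspond to representations up to conjugacy rather than subgroups up to conjugacy; the discrepancy, by at most $\#\Aut(C_2\times C_2)=6$, does not affect completeness) and the Kramer--Mesner exclusions recorded in Table~\ref{tbl:km}. Your explicit $4\oplus 3$ module showing such Klein four groups really exist is a nice addition, as it explains why no rank-counting shortcut can replace the computation --- a point the paper makes only implicitly by exhibiting seven such groups.
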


\begin{proof} 
Using Magma, we got that there are $42$ subgroup classes of $\GL(7,2)$ of order $4$, falling into $7$ cyclic and $35$ non-cyclic ones.
After removing all groups containing a subgroup of order $2$ of a conjugacy type which is excluded by Lemma~\ref{lem:2}, there remains a single cyclic group $G_{4,1}$ and $7$ non-cyclic groups
\begin{align*}
G_{4,2} &
= \left\langle
A_{7,3},
\left(\begin{smallmatrix}
0 & 1 & 1 & 1 & 0 & 0 & 0 \\
1 & 0 & 1 & 1 & 0 & 0 & 0 \\
0 & 0 & 0 & 1 & 1 & 1 & 0 \\
0 & 0 & 1 & 0 & 1 & 1 & 0 \\
1 & 1 & 1 & 1 & 0 & 1 & 0 \\
1 & 1 & 1 & 1 & 1 & 0 & 0 \\
0 & 0 & 0 & 0 & 0 & 0 & 1
\end{smallmatrix}\right)
\right\rangle\text{,} &
G_{4,3} &
= \left\langle
A_{7,3},
\left(\begin{smallmatrix}
0 & 1 & 0 & 0 & 0 & 0 & 1 \\
1 & 0 & 0 & 0 & 0 & 0 & 1 \\
1 & 1 & 0 & 1 & 1 & 1 & 0 \\
1 & 1 & 1 & 0 & 1 & 1 & 0 \\
1 & 1 & 1 & 1 & 1 & 0 & 1 \\
1 & 1 & 1 & 1 & 0 & 1 & 1 \\
0 & 0 & 0 & 0 & 0 & 0 & 1 
\end{smallmatrix}\right)
\right\rangle\text{,} \\
G_{4,4} &
= \left\langle
A_{7,3},
\left(\begin{smallmatrix}
1 & 0 & 0 & 0 & 1 & 1 & 0 \\
0 & 1 & 0 & 0 & 1 & 1 & 0 \\
0 & 0 & 1 & 0 & 1 & 1 & 0 \\
0 & 0 & 0 & 1 & 1 & 1 & 0 \\
0 & 0 & 1 & 1 & 0 & 1 & 0 \\
0 & 0 & 1 & 1 & 1 & 0 & 0 \\
1 & 1 & 1 & 1 & 0 & 0 & 1
\end{smallmatrix}\right)
\right\rangle\text{,} &
G_{4,5} &
= \left\langle
A_{7,3},
\left(\begin{smallmatrix}
0 & 1 & 1 & 1 & 1 & 1 & 1 \\
1 & 0 & 1 & 1 & 1 & 1 & 1 \\
1 & 1 & 0 & 1 & 0 & 0 & 1 \\
1 & 1 & 1 & 0 & 0 & 0 & 1 \\
1 & 1 & 0 & 0 & 1 & 0 & 1 \\
1 & 1 & 0 & 0 & 0 & 1 & 1 \\
0 & 0 & 0 & 0 & 0 & 0 & 1
\end{smallmatrix}\right)
\right\rangle\text{,} \\
G_{4,6} &
= \left\langle
A_{7,3},
\left(\begin{smallmatrix}
0 & 1 & 1 & 1 & 0 & 0 & 0 \\
1 & 0 & 1 & 1 & 0 & 0 & 0 \\
1 & 1 & 0 & 1 & 1 & 1 & 0 \\
1 & 1 & 1 & 0 & 1 & 1 & 0 \\
1 & 1 & 0 & 0 & 0 & 1 & 0 \\
1 & 1 & 0 & 0 & 1 & 0 & 0 \\
1 & 1 & 0 & 0 & 1 & 1 & 1
\end{smallmatrix}\right)
\right\rangle\text{,} &
G_{4,7} & = 
\left\langle
A_{7,3},
\left(\begin{smallmatrix}
1 & 0 & 0 & 0 & 0 & 0 & 0 \\
0 & 1 & 0 & 0 & 0 & 0 & 0 \\
0 & 0 & 1 & 0 & 0 & 0 & 0 \\
0 & 0 & 0 & 1 & 0 & 0 & 0 \\
1 & 0 & 0 & 1 & 1 & 0 & 0 \\
0 & 1 & 1 & 0 & 0 & 1 & 0 \\
1 & 1 & 0 & 0 & 0 & 0 & 1
\end{smallmatrix}\right)
\right\rangle\text{ and} \\
G_{4,8} & = 
\left\langle
A_{7,3},
\left(\begin{smallmatrix}
0 & 1 & 0 & 0 & 1 & 1 & 1 \\
1 & 0 & 0 & 0 & 1 & 1 & 1 \\
1 & 1 & 1 & 0 & 1 & 1 & 1 \\
1 & 1 & 0 & 1 & 1 & 1 & 1 \\
0 & 1 & 1 & 0 & 1 & 0 & 0 \\
1 & 0 & 0 & 1 & 0 & 1 & 0 \\
1 & 1 & 1 & 1 & 0 & 0 & 1
\end{smallmatrix}\right)
\right\rangle\text{.}
\end{align*}
The latter $7$ groups could be excluded computationally.
\end{proof}

\begin{lemma}\label{lem:8}
The design $\D$ is not invariant under a group of order $8$.
\end{lemma}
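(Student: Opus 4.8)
The plan is to cut the possible groups of order $8$ down to a very short list by purely group-theoretic means, and then to eliminate the survivors with the Kramer--Mesner method exactly as in the proof of Lemma~\ref{lem:4}. A group $G$ of order $8$ is a $2$-group, hence it contains subgroups of order $2$ and of order $4$, and $G$-invariance of $\D$ forces $\D$ to be invariant under each of them. By Lemma~\ref{lem:2} every involution of $G$ must be conjugate in $\GL(7,2)$ to $A_{7,3}$, and by Lemma~\ref{lem:4} every subgroup of order $4$ must be conjugate to the \emph{cyclic} group $G_{4,1}$. In particular $G$ possesses no subgroup isomorphic to $C_2\times C_2$. The only groups of order $8$ without a Klein four subgroup are the cyclic group $C_8$ and the quaternion group $Q_8$, so $G$ is isomorphic to one of these two.

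First I would treat the cyclic case. An element of order $8$ is a $2$-element, hence unipotent; writing it as $I+N$ with $N$ nilpotent, over $\F_2$ one has $(I+N)^{2^k}=I+N^{2^k}$, so order $8$ means that $N$ has a Jordan block of size in $\{5,6,7\}$. Its fourth power $I+N^4$ is the associated involution, and $\operatorname{rank}(N^4)=\sum_i\max(b_i-4,0)$, taken over the Jordan block sizes $b_i$, equals the type parameter $s$ of the involution $A_{7,s}$. Running through the partitions of $7$ with largest part $\ge 5$, this rank is $3$ only for the regular unipotent type $[7]$, whereas $[6,1]$, $[5,2]$ and $[5,1,1]$ yield involutions of type $A_{7,2}$ or $A_{7,1}$, which are excluded by Lemma~\ref{lem:2}. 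For the regular unipotent element the square has Jordan type $[4,3]$ and is therefore conjugate to $G_{4,1}$, so this single conjugacy class genuinely survives the subgroup test. For $Q_8$ I would let Magma enumerate the conjugacy classes of quaternion subgroups of $\GL(7,2)$ and retain only those whose unique involution has type $A_{7,3}$ and whose three cyclic subgroups of order $4$ are conjugate to $G_{4,1}$.

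This leaves a short list consisting of the regular unipotent $C_8$ together with the few admissible $Q_8$-classes, and these are eliminated computationally by the Kramer--Mesner method, the data being recorded in Table~\ref{tbl:km}. Here the computation is light rather than the bottleneck: since $\#G=8$ the orbits on the lines and planes are long, the reduced equation systems are small, and infeasibility is frequently already visible as a zero row or as the impossibility of writing $\#\D=381$ as a sum of the surviving $K$-orbit lengths.

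The genuine point of care — the step I expect to be the real obstacle — is guaranteeing completeness of the candidate list. The reduction to $C_8$ and $Q_8$ and the Jordan-form analysis of the cyclic case are clean, but one must rely on Magma for the classification of order-$8$ subgroups up to conjugacy, and keep in mind (cf.\ the footnote in Section~\ref{sect:autfano}) that inequivalent generators may generate conjugate groups. Consequently the filtering by Lemmas~\ref{lem:2} and~\ref{lem:4} has to be carried out at the level of subgroups rather than of individual elements, so that no admissible group of order $8$ is inadvertently dropped before the final computational elimination.
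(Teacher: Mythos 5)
Your proposal is correct and follows essentially the same route as the paper: filter the order-$8$ subgroups of $\GL(7,2)$ through Lemmas~\ref{lem:2} and~\ref{lem:4} (leaving exactly one cyclic class, the regular unipotent $G_{8,1}$, and two quaternion classes $G_{8,2}$, $G_{8,3}$), then eliminate these survivors with the Kramer--Mesner method; your abstract reduction to $C_8$/$Q_8$ and the Jordan-type analysis of the cyclic case is a nice self-contained substitute for part of the paper's Magma enumeration of all $867$ subgroup classes of order $8$. The only small inaccuracy is your closing expectation that infeasibility is often visible from a zero row or orbit-length arithmetic: by Table~\ref{tbl:km}, all three surviving groups actually required the solver, though with short runtimes.
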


\begin{proof}
There are $867$ subgroup classes of $\GL(7,2)$ of order $8$.
After removing all groups containing a subgroup of order $2$ or $4$ of a conjugacy type which is excluded by Lemma~\ref{lem:2} or Lemma~\ref{lem:4}, there remains the single cyclic group
\[
G_{8,1} = \left\langle \left(\begin{smallmatrix}
1&1&0&0&0&0&0 \\
0&1&1&0&0&0&0 \\
0&0&1&1&0&0&0 \\
0&0&0&1&1&0&0 \\
0&0&0&0&1&1&0 \\
0&0&0&0&0&1&1 \\
0&0&0&0&0&0&1
\end{smallmatrix}\right)\right\rangle
\]
and the two quaternion groups
\begin{align*}
G_{8,2} & = \left\langle
\left(\begin{smallmatrix}
1&1&0&0&0&0&0 \\
0&1&1&0&0&0&0 \\
0&0&1&0&0&0&0 \\
0&0&0&1&1&0&0 \\
0&0&0&0&1&1&0 \\
0&0&0&0&0&1&1 \\
0&0&0&0&0&0&1
\end{smallmatrix}\right),
\left(\begin{smallmatrix}
1&0&1&0&1&1&0 \\
0&1&1&0&0&1&0 \\
0&0&1&0&0&0&1 \\
0&1&0&1&1&0&0 \\
0&0&1&0&1&0&0 \\
0&0&0&0&0&1&1 \\
0&0&0&0&0&0&1 \\
\end{smallmatrix}\right)
\right\rangle\text{ and} \\
G_{8,3} & = \left\langle
\left(\begin{smallmatrix}
1&1&0&0&0&0&0 \\
0&1&1&0&0&0&0 \\
0&0&1&0&0&0&0 \\
0&0&0&1&1&0&0 \\
0&0&0&0&1&1&0 \\
0&0&0&0&0&1&1 \\
0&0&0&0&0&0&1 \\
\end{smallmatrix}\right),
\left(\begin{smallmatrix}
1&1&0&0&0&1&0 \\
0&1&0&0&0&0&1 \\
0&0&1&0&0&0&0 \\
1&0&1&1&1&1&0 \\
0&1&1&0&1&0&1 \\
0&0&1&0&0&1&1 \\
0&0&0&0&0&0&1
\end{smallmatrix}\right)
\right\rangle\text{.}
\end{align*}
These $3$ groups are excluded computationally.
\end{proof}

\subsection{Groups of order $3^r$}
\begin{lemma}\label{lem:3}
If $\D$ is invariant under a group $G$ of order $3$, then $G$ is conjugate to one of the cyclic groups
\[
G_{3,1} = \left\langle
\left(
\begin{smallmatrix}
0& 1& 0& 0& 0& 0& 0\\ 
1& 1& 0& 0& 0& 0& 0\\ 
0& 0& 0& 1& 0& 0& 0\\
0& 0& 1& 1& 0& 0& 0\\
0& 0& 0& 0& 0& 1& 0\\
0& 0& 0& 0& 1& 1& 0\\
0& 0& 0& 0& 0& 0& 1
\end{smallmatrix}
\right)
\right\rangle
\quad
\text{and}
\quad
G_{3,2} = \left\langle
\left(
\begin{smallmatrix}
0& 1& 0& 0& 0& 0& 0\\ 
1& 1& 0& 0& 0& 0& 0\\ 
0& 0& 0& 1& 0& 0& 0\\ 
0& 0& 1& 1& 0& 0& 0\\ 
0& 0& 0& 0& 1& 0& 0\\ 
0& 0& 0& 0& 0& 1& 0\\ 
0& 0& 0& 0& 0& 0& 1
\end{smallmatrix}
\right)\right\rangle\text{.}
\]
\end{lemma}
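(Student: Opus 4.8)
The plan is to mirror the order-$2$ analysis: first determine all conjugacy classes of order-$3$ elements of $\GL(7,2)$ from their normal forms, and then rule out all but the two listed types. For the classification, an $A\in\GL(7,2)$ of order $3$ satisfies $A^3=I$ and $A\neq I$, so its minimal polynomial divides $X^3-1=(X+1)(X^2+X+1)$ over $\F_2$ and must contain the irreducible factor $X^2+X+1$ (a factor $X+1$ alone would force $A=I$). Since $X^3-1$ is squarefree over $\F_2$, every elementary divisor of $A$ equals $X+1$ or $X^2+X+1$, each to the first power. Writing $a$ for the number of factors $X^2+X+1$ and $b$ for the number of factors $X+1$, we have $2a+b=7$ with $a\geq 1$, so $a\in\{1,2,3\}$ and there are exactly three conjugacy classes. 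Their generalized Jordan normal forms are block diagonal with $a$ companion blocks $\left(\begin{smallmatrix}0&1\\1&1\end{smallmatrix}\right)$ of $X^2+X+1$ followed by $b$ unit blocks $(1)$; the cases $a=3$ and $a=2$ are exactly $G_{3,1}$ and $G_{3,2}$.

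To eliminate the remaining type $a=1$, I would study its fixed structure. Here the fixed space $F=\ker(A-I)$ has dimension $b=5$, so the fixed points of $\langle A\rangle$ are precisely the $2^5-1=31$ points of $F$. Any line $L\subseteq F$ is spanned by fixed points, hence is itself fixed by $A$, and by $\lambda=1$ the unique block $B\in\D$ through $L$ is fixed as well. The restriction $A|_B$ is then an automorphism of $\mathcal{L}(B)\cong\mathcal{L}(\F_2^3)$ of order dividing $3$; applying the classification above to $\GL(3,2)$ (where $2a+b=3$ forces $a=b=1$) shows that an order-$3$ such automorphism fixes only a single point. Since $B$ already contains the three fixed points of $L$, the restriction $A|_B$ cannot have order $3$, so it is the identity and $B\subseteq F$.

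It follows that the blocks of $\D$ contained in $F$ cover every line of $F$ exactly once, i.e.\ they form a binary $q$-Steiner triple system $S_2[2,3,5]$ on $F\cong\F_2^5$. But such a system cannot exist: its block count would have to be $\gauss{5}{2}{2}/\gauss{3}{2}{2}=155/7\notin\Z$, which violates the integrality conditions (equivalently, $5\nequiv 1,3\bmod 6$). This rules out the type $a=1$ and leaves only $G_{3,1}$ and $G_{3,2}$, as claimed. The step I expect to be most delicate is the forcing argument of the previous paragraph, namely showing that the covering block of a line in $F$ must itself lie in $F$ rather than being a fixed block on which $A$ acts with order $3$; this hinges entirely on the single-fixed-point count for order-$3$ elements of $\GL(3,2)$, after which the admissibility obstruction is immediate.
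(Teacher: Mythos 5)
Your proof is correct, but it replaces the paper's key step with a genuinely different, computer-free argument. The classification part agrees with the paper: the paper also finds exactly three conjugacy classes of order-$3$ subgroups of $\GL(7,2)$, represented by $G_{3,1}$, $G_{3,2}$ and a third group $G_{3,3}$ generated by your $a=1$ normal form (one companion block of $X^2+X+1$ followed by a $5\times 5$ identity), and your elementary-divisor derivation is a clean way to obtain this list. The divergence is in how the third class is eliminated: the paper does it computationally, feeding the Kramer--Mesner system for $G_{3,3}$ to a dancing-links solver, which reports insolvability in under a second (Table~\ref{tbl:km}), whereas you eliminate it theoretically. Your argument holds up under scrutiny: the fixed space $F$ of the $a=1$ type is $5$-dimensional; for any line $L\subseteq F$ the unique block through $L$ is fixed; since an order-$3$ element of $\GL(3,2)$ has a $1$-dimensional fixed space (the companion matrix of $X^2+X+1$ minus the identity is invertible over $\F_2$), the restriction of the automorphism to that block cannot have order $3$ and so is the identity, forcing the block into $F$; hence the blocks lying inside $F$ would form an $S_2[2,3,5]$, which is impossible because $\gauss{5}{2}{2}/\gauss{3}{2}{2}=155/7\notin\Z$. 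Your route buys a hand-checkable proof in the same spirit as the paper's Section~\ref{sect:triple} analysis of involutions, and it generalizes: for any binary $q$-Steiner triple system, the fixed space of an order-$3$ automorphism must itself carry a (possibly trivial) binary $q$-Steiner triple system, so any fixed-space dimension with non-admissible parameters is excluded. The paper's route buys uniformity: the same computational machinery disposes of $G_{3,3}$ along with the many other groups in Table~\ref{tbl:km} for which no comparable theoretical argument is known. Note also that your method could not have proved more here: the surviving types $a=2$ and $a=3$ have fixed spaces of dimension $3$ and $1$, which escape your criterion --- consistent with $G_{3,1}$ and $G_{3,2}$ remaining in Theorem~\ref{tm:main}.
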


\begin{proof}
There are $3$ conjugacy classes of subgroups of $\GL(7,2)$ of order $3$.
They are represented by $G_{3,1}$, $G_{3,2}$ and
\[
G_{3,3} = \left\langle
\left(
\begin{smallmatrix}
0& 1& 0& 0& 0& 0& 0\\ 
1& 1& 0& 0& 0& 0& 0\\ 
0& 0& 1& 0& 0& 0& 0\\
0& 0& 0& 1& 0& 0& 0\\
0& 0& 0& 0& 1& 0& 0\\
0& 0& 0& 0& 0& 1& 0\\
0& 0& 0& 0& 0& 0& 1
\end{smallmatrix}
\right)\right\rangle\text{.}
\]
The group $G_{3,3}$ could be excluded computationally.
\end{proof}

\begin{lemma}\label{lem:9}
The design $\D$ is not invariant under a group of order $9$.
\end{lemma}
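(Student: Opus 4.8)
The plan is to follow the same elimination scheme as in Lemmas~\ref{lem:4} and~\ref{lem:8}: enumerate the conjugacy classes of subgroups $G\le\GL(7,2)$ of order $9$, discard those already ruled out through their smaller subgroups, and dispose of the survivors with the Kramer-Mesner method. Since $9=3^2$, every such $G$ is isomorphic to the cyclic group $C_9$ or to the elementary abelian group $C_3\times C_3$, and in either case $G$ contains at least one subgroup of order $3$. By Lemma~\ref{lem:3}, if $\D$ is $G$-invariant then each order-$3$ subgroup of $G$ must be conjugate to $G_{3,1}$ or $G_{3,2}$; so any $G$ possessing a subgroup of type $G_{3,3}$ is immediately eliminated. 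The first task is therefore to determine, via the $\F_2[G]$-module structure on $\F_2^7$, which groups of order $9$ survive this filtering. I expect the answer to be exactly two classes (consistent with the absence of an order-$9$ group in Theorem~\ref{tm:main}): a cyclic one and an elementary abelian one.

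For the cyclic case, an element of order $9$ has the degree-$6$ irreducible polynomial $X^6+X^3+1$ (the minimal polynomial of a primitive $9$th root of unity over $\F_2$, since $2$ has multiplicative order $6$ modulo $9$) dividing its characteristic polynomial; in dimension $7$ this forces the characteristic polynomial to be $(X+1)(X^6+X^3+1)$, giving a single conjugacy class of cyclic subgroups $G_{9,1}\cong C_9$. Cubing a generator yields an order-$3$ element that decomposes into three companion blocks of $X^2+X+1$ together with one fixed coordinate, i.e.\ of type $G_{3,1}$. As $G_{3,1}$ is admissible, $G_{9,1}$ is not removed by the filtering and must be treated separately.

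For the elementary abelian case, I would decompose $\F_2^7$ into irreducible $\F_2[C_3\times C_3]$-modules: the trivial module (dimension $1$) and four faithful two-dimensional modules $M_1,\dots,M_4$, where $M_i$ is the module on which exactly one of the four order-$3$ subgroups $H_1,\dots,H_4$ acts trivially. Writing $a$ for the multiplicity of the trivial module and $b_i$ for that of $M_i$, one has $a+2\sum_i b_i=7$; faithfulness forces at least two $b_i$ to be positive, and a generator of $H_i$ acts with $\bigl(\sum_j b_j\bigr)-b_i$ companion blocks of $X^2+X+1$, hence is of type $G_{3,3}$, $G_{3,2}$ or $G_{3,1}$ according as this count is $1$, $2$ or $3$. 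Running through the admissible multiplicity vectors, the only one avoiding a type-$G_{3,3}$ subgroup is $(b_1,b_2,b_3,b_4)=(1,1,1,0)$ up to relabeling, producing a single surviving class $G_{9,2}\cong C_3\times C_3$ (with three subgroups of type $G_{3,2}$ and one of type $G_{3,1}$). This group-theoretic bookkeeping is most conveniently confirmed in Magma.

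Finally I would exclude $G_{9,1}$ and $G_{9,2}$ by the Kramer-Mesner method, recording the data in Table~\ref{tbl:km}. Because both groups are comparatively large, the induced orbits on the $2$- and $3$-subspaces are few and the reduced systems small, so I expect the solver to terminate quickly, perhaps with the insolvability already visible from a zero row or from the impossibility of writing $\#\D=381$ as a sum of surviving orbit lengths. The main obstacle is that the clean fixed-point shortcuts do not finish the job uniformly. For $G_{9,1}$ one can argue purely theoretically that it fixes no block: a fixed plane would carry an element of $\GL(3,2)$ of order dividing $9$, hence of order $1$ or $3$, and in either case its cube would fix the whole plane pointwise, contradicting that the type-$G_{3,1}$ cube fixes only one point of $\PG(6,2)$. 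For $G_{9,2}$, however, the three type-$G_{3,2}$ subgroups each fix a plane setwise-invariant under the full group, so fixed blocks are not excluded and the counting constraints alone are inconclusive; it is precisely this case that makes the computational search the pragmatic and, I expect, decisive step.
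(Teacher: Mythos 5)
Your proposal is correct and follows essentially the same route as the paper: the paper likewise enumerates the four conjugacy classes of order-$9$ subgroups of $\GL(7,2)$ (one cyclic, three non-cyclic), discards those containing an order-$3$ subgroup of the type $G_{3,3}$ excluded by Lemma~\ref{lem:3}, and then eliminates the two survivors $G_{9,1}$ and $G_{9,2}$ computationally via the Kramer--Mesner method. The only difference is that the paper obtains the subgroup classification from Magma, whereas you derive it (together with the absence of fixed blocks for $G_{9,1}$) by hand via the $\F_2$-module structure; in both cases the decisive final step is the computer search.
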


\begin{proof}
There are four conjugacy classes of subgroups of $\GL(7,2)$ of order $9$, a single cyclic and three non-cyclic ones.
After removing all groups containing a subgroup of order $3$ of a conjugacy type which is excluded by Lemma~\ref{lem:3}, there remains the cyclic group
\[
	G_{9,1}
	= \left\langle
	\left(\begin{smallmatrix}
	1 & 0 & 0 & 0 & 0 & 0 & 0 \\
	0 & 0 & 1 & 0 & 0 & 0 & 0 \\
	0 & 0 & 0 & 1 & 0 & 0 & 0 \\
	0 & 0 & 0 & 0 & 1 & 0 & 0 \\
	0 & 0 & 0 & 0 & 0 & 1 & 0 \\
	0 & 0 & 0 & 0 & 0 & 0 & 1 \\
	0 & 1 & 0 & 0 & 1 & 0 & 0
	\end{smallmatrix}\right)
	\right\rangle
\]
and the non-cyclic group
\[
	G_{9,2}
	= \left\langle\left(
	\begin{smallmatrix}
	0 & 1 & 0 & 0 & 0 & 0 & 0 \\
	1 & 1 & 0 & 0 & 0 & 0 & 0 \\
	0 & 0 & 0 & 1 & 0 & 0 & 0 \\
	0 & 0 & 1 & 1 & 0 & 0 & 0 \\
	0 & 0 & 0 & 0 & 0 & 1 & 0 \\
	0 & 0 & 0 & 0 & 1 & 1 & 0 \\
	0 & 0 & 0 & 0 & 0 & 0 & 1
	\end{smallmatrix}\right),
	\left(\begin{smallmatrix}
	1 & 0 & 1 & 1 & 1 & 0 & 0 \\
	0 & 1 & 1 & 0 & 0 & 1 & 0 \\
	0 & 0 & 0 & 0 & 1 & 1 & 0 \\
	0 & 0 & 0 & 0 & 1 & 0 & 0 \\
	0 & 0 & 0 & 1 & 1 & 0 & 0 \\
	0 & 0 & 1 & 1 & 0 & 1 & 0 \\
	0 & 0 & 0 & 0 & 0 & 0 & 1
	\end{smallmatrix}
	\right)\right\rangle\text{.}
\]
These two groups could be excluded computationally.
\end{proof}

\subsection{Groups of other prime power orders}
\begin{lemma}\label{lem:5}
The design $\D$ is not invariant under a group of order $5$.
\end{lemma}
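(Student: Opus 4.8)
The plan is to follow the same elimination strategy used for the other prime powers, exploiting the fact that an element of order $5$ generates a very rigid cyclic subgroup of $\GL(7,2)$. First I would determine the conjugacy classes of elements of order $5$ in $\GL(7,2)$ via their rational normal forms. An element of order $5$ has minimal polynomial dividing $X^5-1 = (X+1)(X^4+X^3+X^2+X+1)$ over $\F_2$; since $2$ has multiplicative order $4$ modulo $5$, the cyclotomic factor $\Phi_5 = X^4+X^3+X^2+X+1$ is irreducible over $\F_2$. Hence every element of order $5$ has invariant factors built from $X+1$ and $\Phi_5$, with $\Phi_5$ appearing at least once. In dimension $7$ the only possibility is one companion block $A_{\Phi_5}$ (of size $4$) together with three trivial $1\times 1$ blocks, so up to conjugacy there is a \emph{single} subgroup of order $5$.

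Next I would analyse the fixed structure of this essentially unique group $G_5$ acting on $\gauss{\F_2^7}{1}{2}$, $\gauss{\F_2^7}{2}{2}$ and $\gauss{\F_2^7}{3}{2}$. Since $G_5$ fixes the $3$-dimensional eigenspace pointwise and acts freely on the complementary $4$-dimensional block (the companion block of the irreducible $\Phi_5$ has no nonzero fixed vectors), the orbit structure is transparent: all points outside the fixed plane lie in orbits of length $5$. This makes the Kramer--Mesner approach very efficient, as the number of $G_5$-orbits on lines and planes is small. I would then assemble the Kramer--Mesner matrix $\kmmat_{2,3}^{G_5}$ and either argue by an arithmetic/counting obstruction (for instance showing $\#\D = 381$ cannot be written as a sum of the available $K$-orbit lengths after discarding columns with an entry exceeding $\lambda = 1$, the \enquote{\emph{orbits!}} obstruction) or feed the reduced system to the solver, concluding that no $G_5$-invariant $S_2[2,3,7]$ exists.

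The main obstacle I anticipate is not the case analysis, which collapses to a single group, but rather ensuring that the counting obstruction is clean enough to avoid a genuine computation. Because $381 = 3\cdot 127$ and the non-fixed planes fall into orbits of length $5$, a divisibility argument on the number of blocks relative to the fixed planes may already force infeasibility; if the fixed $3$-subspace together with $G_5$-orbit sizes produces an incompatible residue for $381$ modulo $5$, the elimination is immediate. Should such an arithmetic shortcut fail, the fallback is the computational exclusion recorded in Table~\ref{tbl:km}, exactly as carried out for $G_{3,3}$ and the groups of order $4$, $8$ and $9$; given the coarse orbit structure induced by the single large irreducible block, the resulting reduced system should be small enough for the dancing-links solver to dispatch quickly.
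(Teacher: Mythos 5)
Your proposal is correct and takes essentially the same approach as the paper: the order-$5$ subgroup of $\GL(7,2)$ is unique up to conjugacy (the paper invokes the Sylow theorems, you use rational normal forms with $\Phi_5$ irreducible over $\F_2$ --- both valid), the orbit structure on planes is one fixed plane plus $2362$ orbits of length $5$, and the exclusion is ultimately computational via Kramer--Mesner. One caveat: the arithmetic shortcut you hope for provably fails, since $381 = 76\cdot 5 + 1$ is a \emph{compatible} residue; the congruence only forces the unique fixed $3$-subspace into $\D$ (which is exactly what the paper exploits by fixing that variable to $1$ before invoking the solver), and the dancing-links computation is then genuinely needed --- the paper reports a runtime of roughly $33$ hours for this case, so it does not \enquote{dispatch quickly}.
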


\begin{proof}
As the Sylow $5$-group of $\GL(7,2)$, there is a unique conjugacy class of subgroups of order $5$.
A representative is the cyclic group
\[
G_5 = \left\langle
\left(
\begin{smallmatrix}
0& 0& 0& 1& 0& 0& 0\\
1& 0& 0& 1& 0& 0& 0\\ 
0& 1& 0& 1& 0& 0& 0\\ 
0& 0& 1& 1& 0& 0& 0\\ 
0& 0& 0& 0& 1& 0& 0\\ 
0& 0& 0& 0& 0& 1& 0\\ 
0& 0& 0& 0& 0& 0& 1
\end{smallmatrix}
\right)\right\rangle\text{.}
\]
The action of this group partitions the set of $3$-subspaces into a single fixed point and $2362$ orbits of length $5$.
Because of $\#\D = 381 = 76\cdot 5 + 1$, the fixed $3$-subspace must be contained in $\D$.
After fixing the corresponding variable to the value $1$, the solver returned the insolvability of the system.
\end{proof}

\pagebreak

\begin{lemma}\label{lem:7}
The design $\D$ is not invariant under a group of order $7$.
\end{lemma}

\begin{proof}
The general linear group $\GL(7,2)$ has $3$ conjugacy classes of subgroups of order $7$, represented by
\begin{multline*}
G_{7,1} =
\left\langle\left(\begin{smallmatrix}
0& 1& 0& 0& 0& 0& 0\\
0& 0& 1& 0& 0& 0& 0\\ 
1& 0& 1& 0& 0& 0& 0\\ 
0& 0& 0& 1& 0& 0& 0\\ 
0& 0& 0& 0& 1& 0& 0\\ 
0& 0& 0& 0& 0& 1& 0\\ 
0& 0& 0& 0& 0& 0& 1\\
\end{smallmatrix}\right)\right\rangle
\text{,}\quad
G_{7,2} =
\left\langle\left(\begin{smallmatrix}
0& 1& 0& 0& 0& 0& 0\\
0& 0& 1& 0& 0& 0& 0\\ 
1& 0& 1& 0& 0& 0& 0\\ 
0& 0& 0& 0& 1& 0& 0\\ 
0& 0& 0& 0& 0& 1& 0\\ 
0& 0& 0& 1& 1& 0& 0\\ 
0& 0& 0& 0& 0& 0& 1\\
\end{smallmatrix}\right)\right\rangle \\
\text{and}\quad
G_{7,3} =
\left\langle\left(\begin{smallmatrix}
0& 1& 0& 0& 0& 0& 0\\
0& 0& 1& 0& 0& 0& 0\\ 
1& 0& 1& 0& 0& 0& 0\\ 
0& 0& 0& 0& 1& 0& 0\\ 
0& 0& 0& 0& 0& 1& 0\\ 
0& 0& 0& 1& 0& 1& 0\\ 
0& 0& 0& 0& 0& 0& 1\\
\end{smallmatrix}\right)\right\rangle
\text{.}
\end{multline*}
For the first two groups, the Kramer-Mesner system is immediately seen to be contradictory.
The third system was excluded computationally.
\end{proof}

\begin{lemma}\label{lem:31}
The design $\D$ is not invariant under a group of order $31$.
\end{lemma}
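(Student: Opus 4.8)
The plan is to combine the special factorization of $31 = 2^5-1$ over $\F_2$ with the fact that $\#\D = 381$ is not divisible by $31$. A group $G \le \GL(7,2)$ of order $31$ is cyclic, say $G = \langle A\rangle$ with $A$ of order $31$, so it suffices to understand how a single element of order $31$ acts on the planes $\gauss{7}{3}{2}$, and no computer search should be needed.

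First I would determine the conjugacy type of $A$. Since the multiplicative order of $2$ modulo $31$ is $5$, the polynomial $X^{31}-1$ factors over $\F_2$ as $(X+1)$ times six distinct irreducible polynomials of degree $5$, whose roots are exactly the elements of order $31$. From $A^{31}=\id$ it follows that the minimal polynomial of $A$ divides the squarefree polynomial $X^{31}-1$; hence $A$ is semisimple, and to have order $31$ its characteristic polynomial must contain one of the degree-$5$ factors $m(X)$. As $2\cdot 5 > 7$, exactly one such factor occurs, yielding a $5$-dimensional $A$-invariant subspace $U$ on which $A$ acts irreducibly. By semisimplicity $U$ has an $A$-invariant complement $W$ of dimension $7-5=2$, on which $A$ induces an element whose order divides both $31$ and $\#\GL(2,2)=6$, hence is trivial; so $A$ fixes $W$ pointwise and $\F_2^7 = U\oplus W$.

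Next I would show that $A$ has no invariant plane. Let $P$ be an $A$-invariant $3$-subspace. Then $P\cap U$ is an $A$-invariant subspace of the irreducible module $U$, hence equals $0$ or $U$; since $\dim U = 5 > 3$ it must be $0$, so $P$ injects into the quotient $\F_2^7/U \cong W$ of dimension $2$, forcing $\dim P \le 2$, a contradiction. Therefore $\langle A\rangle$ fixes no plane, and every $G$-orbit on $\gauss{7}{3}{2}$ has length $31$.

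Finally, since a $G$-invariant design $\D$ is a union of $G$-orbits of planes, its number of blocks would be divisible by $31$. But $\#\D = 381 = 12\cdot 31 + 9$ is not a multiple of $31$, a contradiction; hence no group of order $31$ can be an automorphism group of $\D$. The one place demanding care -- and the only real obstacle -- is the module-theoretic step establishing that the irreducibility of the $5$-dimensional part leaves no room for a $3$-dimensional invariant subspace; once this is in hand, the divisibility mismatch finishes the argument at once.
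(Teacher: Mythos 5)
Your proof is correct, and its overall skeleton is the same as the paper's: show that every orbit of the order-$31$ group on $\gauss{\F_2^7}{3}{2}$ has full length $31$ (semiregularity), then conclude from $31 \nmid 381 = \#\D$ that $\D$ cannot be a union of such orbits. The difference lies in how semiregularity is established. The paper identifies the unique conjugacy class of subgroups of order $31$ (the Sylow $31$-subgroups), exhibits an explicit matrix representative, and simply asserts the semiregularity of its action on planes, a fact verified computationally (the table records the orbit structure $31^{381}$). You instead prove semiregularity theoretically for an arbitrary element $A$ of order $31$: since $X^{31}-1$ is squarefree over $\F_2$ and factors as $(X+1)$ times six irreducible quintics, $A$ is semisimple with exactly one $5$-dimensional irreducible constituent $U$, and any $A$-invariant $3$-subspace would have to meet $U$ trivially and hence embed in the $2$-dimensional quotient, which is impossible. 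This buys you two things: the argument is entirely computer-free, and it does not require knowing that all subgroups of order $31$ are conjugate -- it applies directly to any element of order $31$ in $\GL(7,2)$. The paper's version is shorter on the page only because it delegates both the conjugacy classification and the orbit computation to software; your module-theoretic step is the honest mathematical content behind that assertion, and it is carried out correctly.
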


\begin{proof}
As the Sylow $31$-subgroup, $\GL(7,2)$ has a unique conjugacy class of subgroups of order $31$.
A representative is given by the group
\[
G_{31} = \left\langle\left(
\begin{smallmatrix}
1 & 0 & 0 & 0 & 0 & 0 & 0 \\
0 & 1 & 0 & 0 & 0 & 0 & 0 \\
0 & 0 & 0 & 1 & 0 & 0 & 0 \\
0 & 0 & 0 & 0 & 1 & 0 & 0 \\
0 & 0 & 0 & 0 & 0 & 1 & 0 \\
0 & 0 & 0 & 0 & 0 & 0 & 1 \\
0 & 0 & 1 & 0 & 0 & 1 & 0
\end{smallmatrix}
\right)\right\rangle\text{.}
\]
Its action on the set of $3$-subspaces is semiregular, which means that all the orbits are of full length $31$.
Because of $31 \nmid 381 = \#\D$, the design $\D$ cannot arise as a union of a selection of these orbits.
\end{proof}

\begin{lemma}\label{lem:127}
The design $\D$ is not invariant under a group of order $127$.
\end{lemma}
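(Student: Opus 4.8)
The plan is to exploit that a subgroup of order $127$ must be a Singer cycle, whose orbits are forced to have full length, so that $\D$ can only be assembled from very few block orbits. First I would record that $127 = 2^7-1$ and that the multiplicative order of $2$ modulo $127$ equals $7$; hence any element of order $127$ in $\GL(7,2)$ has an irreducible characteristic polynomial of degree $7$ and acts irreducibly on $\F_2^7$. In particular the group it generates, $G_{127}$, is up to conjugacy the unique Sylow $127$-subgroup, and $\F_2^7$ has no proper nonzero $G_{127}$-invariant subspace. Consequently there are no fixed lines and no fixed planes, and since $127$ is prime, the action on the $127$ points is regular, the action on the $\gauss{7}{2}{2}=2667$ lines is semiregular with $21$ orbits of length $127$, and the action on the $\gauss{7}{3}{2}=11811$ planes is semiregular with $93$ orbits of length $127$.

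Next I would translate $G_{127}$-invariance into a feasibility question. A $G_{127}$-invariant $\D$ is a union of plane orbits, each of length $127$; because $\#\D = 381 = 3\cdot 127$, the design $\D$ must be the union of \emph{exactly three} plane orbits. I would phrase this through the Kramer--Mesner matrix $\kmmat_{2,3}^{G_{127}}$, which here has format $21\times 93$. Counting planes through a fixed line gives $\gauss{5}{1}{2}=31$, so every row sums to $31$; counting lines in a plane gives $\gauss{3}{2}{2}=7$, so every column sums to $7$. We seek a zero-one vector $\mathbf{x}$ with $\kmmat_{2,3}^{G_{127}}\mathbf{x} = \mathbf{1}$, necessarily with exactly three ones. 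Since $\lambda = 1$, a chosen orbit may cover a given line orbit at most once, so every column containing an entry $\ge 2$ is deleted in the reduction. A surviving column is then the indicator vector of a $7$-subset of the $21$ line orbits, and a solution amounts precisely to a choice of three such columns whose supports partition the set of all $21$ line orbits into three disjoint $7$-sets.

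Finally I would reduce and solve this (small) exact-cover system with the dancing-links solver and report that it has no solution; by the Sylow reduction set up at the start of the section, this shows that $\#\Aut(\D)$ is not divisible by $127$. The main obstacle is that, in contrast to the order-$31$ case of Lemma~\ref{lem:31}, a pure divisibility argument does \emph{not} suffice, because $127 \mid 381$; the genuine content lies in the infeasibility of the reduced system rather than in a counting mismatch. The one structural point requiring care is the irreducibility (Singer) property, since it is exactly what guarantees that all line and plane orbits have the full length $127$, which both pins $\D$ down to three plane orbits and justifies the column reduction; it is moreover possible that the reduction already leaves some line orbit uncovered (a zero row), in which case the contradiction is immediate and no search is needed.
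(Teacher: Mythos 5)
Your proposal is correct, and its structural core --- the identification of the order-$127$ subgroup as the Singer subgroup via $\operatorname{ord}_{127}(2)=7$, the irreducibility argument forcing semiregularity on lines and planes, and the consequence that $\D$ would have to be a union of exactly three plane orbits of length $127$ --- coincides with the paper's first step and is carried out accurately (the counts $21$ line orbits, $93$ plane orbits, row sums $31$, column sums $7$ all check out). Where you diverge is in how non-existence is concluded: the paper does not run any computation here at all, but simply cites the fact that the Singer subgroup has already been excluded by Thomas \cite[p.~242]{Tho87}, a short theoretical argument predating this work; you instead reformulate the Kramer--Mesner system $\kmmat_{2,3}^{G_{127}}\mathbf{x}=\mathbf{1}$ as an exact-cover problem (three pairwise disjoint $7$-subsets of the $21$ line orbits) and propose to settle it with the dancing-links solver. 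Your route is self-contained and fits the methodology the paper uses for the other prime orders (Lemmas~\ref{lem:5} and \ref{lem:7}), and the $21\times 93$ system is tiny compared to the systems in Table~\ref{tbl:km}, so the search would terminate instantly; its only weakness is that, as written, the final infeasibility is asserted rather than established --- it rests on a computation you have not performed (or, alternatively, on Thomas's theorem, which you do not invoke). The paper's citation buys a computer-free proof and a connection to prior literature; your reduction buys uniformity of method and a pleasant combinatorial reformulation, but leaves the decisive step to the machine.
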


\begin{proof}
As the Sylow $127$-subgroup, $\GL(7,2)$ has a unique conjugacy class of subgroups of order $127$.
Because of $127 = \gauss{7}{1}{2}$, it is the Singer subgroup of $\GL(7,2)$, which has already been excluded in \cite[p.~242]{Tho87}.
\end{proof}

\subsection{Groups of non-prime power order}
The above results on prime-power orders can be combined into the following restriction on the order of $\Aut(\D)$:
\begin{lemma}
\label{lem:1234612}
	\[
		\#\Aut(\D)\in\{1,2,3,4,6,12\}
	\]
\end{lemma}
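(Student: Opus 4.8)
The plan is to assemble the prime-power exclusions of the preceding lemmas into a single divisibility constraint on $\#\Aut(\D)$, using the Sylow theorems as the bridge. First I would recall that $\Aut(\D)$ is a subgroup of $\GL(7,2)$, whose order factors as in \eqref{eq:ordgl27}, so that the only primes which can possibly divide $\#\Aut(\D)$ are $2$, $3$, $5$, $7$, $31$ and $127$.

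The key mechanism is the following monotonicity observation. If $p^r \mid \#\Aut(\D)$, then a Sylow $p$-subgroup of $\Aut(\D)$ has order divisible by $p^r$, and since a finite $p$-group possesses subgroups of every order dividing its own order, $\Aut(\D)$ contains a subgroup $G$ of order exactly $p^r$. By definition $\D$ is then $G$-invariant. Consequently, once we have shown that \emph{no} $G$-invariant design exists for any group $G$ of order $p^r$, we may conclude $p^r \nmid \#\Aut(\D)$, and therefore $p^{r'} \nmid \#\Aut(\D)$ for every $r' \geq r$ as well.

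Applying this to the lemmas already established: Lemma~\ref{lem:5}, Lemma~\ref{lem:7}, Lemma~\ref{lem:31} and Lemma~\ref{lem:127} eliminate the primes $5$, $7$, $31$ and $127$ outright, so none of these divides $\#\Aut(\D)$. Lemma~\ref{lem:8} excludes order $8 = 2^3$, whence the $2$-part of $\#\Aut(\D)$ divides $4$; and Lemma~\ref{lem:9} excludes order $9 = 3^2$, whence its $3$-part divides $3$. Combining these bounds, $\#\Aut(\D)$ divides $4 \cdot 3 = 12$, and its admissible values are precisely the divisors of $12$, namely $\{1,2,3,4,6,12\}$.

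I do not expect any genuine obstacle in this final step: the argument is purely a bookkeeping combination of the earlier exclusions with elementary Sylow theory, and all the substantive work has already been carried out in Lemmas~\ref{lem:8}, \ref{lem:9}, \ref{lem:5}, \ref{lem:7}, \ref{lem:31} and \ref{lem:127}. The one point that warrants a word of care is the monotonicity step—ensuring that excluding a single order $p^r$ automatically excludes every higher power $p^{r'}$—which rests on the standard fact that a $p$-group has subgroups of every intermediate order.
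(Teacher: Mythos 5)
Your proposal is correct and follows essentially the same route as the paper: the paper's proof likewise takes any prime power $q$ dividing $\#\Aut(\D)$, invokes the Sylow theorems (together with the fact that $p$-groups have subgroups of every dividing order) to produce a subgroup of order $q$ under which $\D$ is invariant, and then cites Equation~\eqref{eq:ordgl27} and Lemmas~\ref{lem:8}, \ref{lem:9}, \ref{lem:5}, \ref{lem:7}, \ref{lem:31}, \ref{lem:127} to conclude $q\in\{2,3,4\}$, hence $\#\Aut(\D)\mid 12$. Your write-up merely makes the monotonicity step explicit, which the paper leaves implicit.
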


\begin{proof}
	Assume $\#\Aut(\D) \neq 1$ and let $q$ be a prime power dividing $\#\Aut(\D)$.
	By the Sylow theorems, $\Aut(\D) \leq \GL(7,2)$ contains a subgroup $G$ of order $q$ and $\D$ is $G$-invariant.
	By Equation~\eqref{eq:ordgl27} and Lemma~\ref{lem:8}, \ref{lem:9}, \ref{lem:5}, \ref{lem:7}, \ref{lem:31}, $\ref{lem:127}$ we get $q\in\{2,3,4\}$.
\end{proof}

\pagebreak

\begin{lemma}\label{lem:6}
The design $\D$ is not invariant under a group of order $6$.
\end{lemma}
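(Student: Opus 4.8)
The plan is to follow the same strategy as in the proofs of Lemmas~\ref{lem:8} and~\ref{lem:9}, combining the group-theoretic constraints already obtained with a computer search. A group $G$ of order $6$ is either cyclic or isomorphic to the symmetric group $S_3$; in both cases it contains a unique (hence normal) subgroup of order $3$ and at least one subgroup of order $2$. If $\D$ is $G$-invariant, then $\D$ is invariant under every subgroup of $G$. In particular, by Lemma~\ref{lem:2} every order-$2$ subgroup of $G$ must be conjugate in $\GL(7,2)$ to $G_2 = \langle A_{7,3}\rangle$, and by Lemma~\ref{lem:3} the order-$3$ subgroup of $G$ must be conjugate to $G_{3,1}$ or $G_{3,2}$. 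This already rules out a large portion of the order-$6$ subgroups of $\GL(7,2)$.

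First I would use Magma to enumerate the conjugacy classes of subgroups of $\GL(7,2)$ of order $6$. For each representative $G$ I would determine the conjugacy type of its order-$3$ subgroup (comparing against $G_{3,1}$, $G_{3,2}$ and the excluded $G_{3,3}$) and of its involutions, whose type $A_{7,s}$ is pinned down by their number of fixed points via Lemma~\ref{lma:order2} and Example~\ref{ex:con2:v7}. Every class whose order-$3$ part is not $G_{3,1}$ or $G_{3,2}$, or which contains an involution of a type other than $A_{7,3}$, is discarded by the previous paragraph. As a check that this filtering has teeth, note that no involution of type $A_{7,3}$ commutes with a generator of $G_{3,1}$: writing $\F_2^7 = U\oplus\langle e_7\rangle$ with $U\cong\F_4^3$, on which $G_{3,1}$ acts by the scalar $\omega\in\F_4$ and trivially, respectively, a commuting involution is $\F_4$-linear on $U$ and fixes $e_7$, hence has an $\F_2$-fixed space of dimension $4+1 = 5$, i.e.\ type $A_{7,2}$. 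Consequently the cyclic groups of order $6$ with order-$3$ part $G_{3,1}$ are eliminated purely group-theoretically, while the classes with order-$3$ part $G_{3,2}$ and the $S_3$-type groups genuinely survive the filter.

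After this reduction only a small number of classes remain, and these I would eliminate computationally by the Kramer-Mesner method exactly as summarized for the other groups in Table~\ref{tbl:km}: build the matrix $\kmmat_{2,3}^{G}$, delete every column carrying an entry exceeding $\lambda = 1$, and feed the reduced $0/1$-system to the dancing-links solver. The main advantage here, compared with the intractable order-$2$ case discussed after Lemma~\ref{lem:2}, is that an order-$6$ group has considerably longer orbits on the planes, so the reduced systems are far smaller and the solver terminates quickly; several survivors should in fact be killed immediately by a zero row or by the impossibility of writing $\#\D = 381$ as a sum of the available orbit lengths. The main obstacle is therefore not the running time but the bookkeeping: one must ensure that the Magma enumeration of order-$6$ classes is complete and that each retained class really does admit order-$2$ and order-$3$ subgroups of the allowed conjugacy types, so that no potentially invariant group slips through before the solver is invoked.
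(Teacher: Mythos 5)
Your proposal matches the paper's proof essentially step for step: the paper likewise enumerates the $12$ conjugacy classes of order-$6$ subgroups of $\GL(7,2)$ with Magma, filters out every class containing an order-$2$ or order-$3$ subgroup excluded by Lemma~\ref{lem:2} or Lemma~\ref{lem:3} (leaving one cyclic group $G_{6,1}$ and two $S_3$-type groups $G_{6,2}$, $G_{6,3}$), and kills the three survivors by the Kramer--Mesner/dancing-links computation recorded in Table~\ref{tbl:km}. Your additional $\F_4$-linearity argument showing that an involution centralizing the $G_{3,1}$-generator must have type $A_{7,2}$ is correct and consistent with the paper's surviving list (the cyclic survivor indeed has order-$3$ part of type $G_{3,2}$), but it is a refinement inside the same strategy rather than a different one.
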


\begin{proof}
There are $12$ subgroup classes of $\GL(7,2)$ of order $6$, falling into $6$ cyclic ones and $6$ of isomorphism type $S_3$.
After removing all groups containing a subgroup of order $2$ or $3$ of a conjugacy type which is excluded by Lemma~\ref{lem:2} or Lemma~\ref{lem:3}, there remains the single cyclic group
\[
G_{6,1} = \left\langle \left(\begin{smallmatrix}
1 & 0 & 0 & 0 & 0 & 0 & 0 \\
0 & 1 & 1 & 0 & 0 & 0 & 0 \\
0 & 0 & 1 & 0 & 0 & 0 & 0 \\
0 & 0 & 0 & 0 & 1 & 0 & 0 \\
0 & 0 & 0 & 1 & 1 & 1 & 0 \\
0 & 0 & 0 & 0 & 0 & 0 & 1 \\
0 & 0 & 0 & 0 & 0 & 1 & 1
\end{smallmatrix}\right)\right\rangle
\]
and the two groups of isomorphism type $S_3$
\begin{align*}
G_{6,2} & = \left\langle
\left(\begin{smallmatrix}
0 & 1 & 0 & 0 & 0 & 0 & 0 \\
1 & 1 & 0 & 0 & 0 & 0 & 0 \\
0 & 0 & 0 & 1 & 0 & 0 & 0 \\
0 & 0 & 1 & 1 & 0 & 0 & 0 \\
0 & 0 & 0 & 0 & 1 & 0 & 0 \\
0 & 0 & 0 & 0 & 0 & 1 & 0 \\
0 & 0 & 0 & 0 & 0 & 0 & 1 
\end{smallmatrix}\right),
\left(\begin{smallmatrix}
1 & 1 & 0 & 0 & 0 & 0 & 0 \\
0 & 1 & 0 & 0 & 0 & 0 & 0 \\
0 & 0 & 1 & 1 & 0 & 0 & 0 \\
0 & 0 & 0 & 1 & 0 & 0 & 0 \\
0 & 0 & 0 & 0 & 1 & 0 & 0 \\
0 & 0 & 0 & 0 & 1 & 0 & 1 \\
0 & 0 & 0 & 0 & 1 & 1 & 0 
\end{smallmatrix}\right)
\right\rangle\text{ and} \\
G_{6,3} & = \left\langle
\left(\begin{smallmatrix}
0 & 1 & 0 & 0 & 0 & 0 & 0 \\
1 & 1 & 0 & 0 & 0 & 0 & 0 \\
0 & 0 & 0 & 1 & 0 & 0 & 0 \\
0 & 0 & 1 & 1 & 0 & 0 & 0 \\
0 & 0 & 0 & 0 & 0 & 1 & 0 \\
0 & 0 & 0 & 0 & 1 & 1 & 0 \\
0 & 0 & 0 & 0 & 0 & 0 & 1
\end{smallmatrix}\right),
\left(\begin{smallmatrix}
1 & 0 & 0 & 0 & 0 & 0 & 0 \\
1 & 1 & 0 & 0 & 0 & 0 & 0 \\
0 & 0 & 0 & 1 & 0 & 0 & 0 \\
0 & 0 & 1 & 0 & 0 & 0 & 0 \\
0 & 0 & 1 & 0 & 1 & 1 & 0 \\
0 & 0 & 1 & 1 & 0 & 1 & 0 \\
0 & 0 & 0 & 0 & 0 & 0 & 1
\end{smallmatrix}\right)
\right\rangle\text{.}
\end{align*}
These $3$ groups are excluded computationally.
\end{proof}

\begin{lemma}\label{lem:12}
The design $\D$ is not invariant under a group of order $12$.
\end{lemma}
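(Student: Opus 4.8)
The plan is to settle the order-$12$ case purely group-theoretically, reducing it to the already-established Lemmas~\ref{lem:4} and~\ref{lem:6} with no further Kramer--Mesner computation. The crucial leverage is that Lemma~\ref{lem:4} constrains not merely the order but the \emph{isomorphism type} of any admissible group of order $4$: it must be cyclic.

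Let $G\le\GL(7,2)$ be a group of order $12$ leaving $\D$ invariant, and let $P\le G$ be a Sylow $2$-subgroup, of order $4$. Since $\D$ is $G$-invariant it is also $P$-invariant, so by Lemma~\ref{lem:4} the group $P$ is conjugate to $G_{4,1}$ and in particular cyclic. Thus $G$ is a group of order $12$ with a cyclic Sylow $2$-subgroup. Up to isomorphism there are exactly five groups of order $12$: the cyclic group $C_{12}$, the group $C_6\times C_2$, the alternating group $A_4$, the dihedral group $D_{12}$, and the dicyclic group $\mathrm{Dic}_3$. Among these, only $C_{12}$ and $\mathrm{Dic}_3$ have a cyclic Sylow $2$-subgroup, while $C_6\times C_2$, $A_4$, and $D_{12}$ have a Klein four Sylow $2$-subgroup and are therefore already excluded.

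It remains to rule out $C_{12}$ and $\mathrm{Dic}_3$. Both contain a cyclic subgroup of order $6$: in $C_{12}$ this is the unique subgroup of index $2$, and in $\mathrm{Dic}_3=\langle a,b\mid a^6=1,\ b^2=a^3,\ bab^{-1}=a^{-1}\rangle$ it is $\langle a\rangle$. Invariance of $\D$ under $G$ would thus force invariance under an order-$6$ subgroup, contradicting Lemma~\ref{lem:6}. Hence no group of order $12$ leaves $\D$ invariant.

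I do not expect a genuine obstacle, as the argument is structural and uses only prior results; the point worth flagging is that neither lemma suffices in isolation. The group $A_4$ has no subgroup of order $6$ --- it is the classical counterexample to the converse of Lagrange's theorem --- so Lemma~\ref{lem:6} alone cannot eliminate it, while $C_{12}$ and $\mathrm{Dic}_3$ both survive the restriction of Lemma~\ref{lem:4}. The two lemmas must therefore be combined, partitioning the five isomorphism types according to whether their Sylow $2$-subgroup is cyclic.
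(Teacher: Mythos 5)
Your proof is correct and takes essentially the same approach as the paper: both arguments combine Lemma~\ref{lem:4} and Lemma~\ref{lem:6} with the classification of groups of order $12$, differing only in the order of application. The paper first applies Lemma~\ref{lem:6} to reduce to $A_4$ (the unique type without an order-$6$ subgroup) and then excludes it via its Klein four Sylow $2$-subgroup, whereas you first apply Lemma~\ref{lem:4} to reduce to the two types with cyclic Sylow $2$-subgroup and then exclude those via their order-$6$ subgroups.
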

\begin{proof}
By Lemma~\ref{lem:6}, we only need to consider subgroups of $\GL(7,2)$ of order $12$ which do not have a subgroup of order $6$.
The only isomorphism type of a group of order $12$ with this property is the alternating group $A_4$.
However, the Sylow $2$-subgroup of $A_4$ is a Klein four group, which is not possible by Lemma~\ref{lem:4}.
\end{proof}

Now Theorem~\ref{tm:main} follows as a combination of Lemmas~\ref{lem:2}, \ref{lem:4}, \ref{lem:1234612}, \ref{lem:6} and \ref{lem:12}.

\begin{table}[t]
\caption{Kramer-Mesner equation systems}
\label{tbl:km}
\noindent
\noindent\resizebox{\linewidth}{!}{
$\begin{array}{llllllll}
G & \text{type} & T\text{-orb} & K\text{-orb} & \text{red. }K\text{-orb} & \text{size} & \text{runtime} \\
\hline
G_{2} & \Z/2\Z & 2^{1288} 1^{91} & 2^{5800} 1^{211} & 2^{4736} 1^{211} & 1379 \times 4947 & \text{\emph{open}} \\
G_{3,1} & \Z/3\Z & 3^{882} 1^{21} & 3^{3930} 1^{21} & 3^{3720} 1^{21} & 903 \times 3741 & \text{\emph{open}} \\
G_{3,2} & \Z/3\Z & 3^{885} 1^{12} & 3^{3925} 1^{36} & 3^{3710} 1^{36} & 897 \times 3746 & \text{\emph{open}} \\
G_{3,3} & \Z/3\Z & 3^{837} 1^{156} & 3^{3875} 1^{186} & 3^{2170} 1^{186} & 993 \times 2356 & <\text{ 1s} \\
G_{4,1} & \Z/4\Z & 4^{644} 2^{42} 1^{7} & 4^{2900} 2^{98} 1^{15} & 4^{2352} 2^{72} 1^{15} & 693 \times 2439 & \text{\emph{open}} \\
G_{4,2} & (\Z/2\Z)^2 & 4^{616} 2^{84} 1^{35} & 4^{2816} 2^{252} 1^{43} & 4^{2032} 1^{43} & 735 \times 2075 & \text{\emph{zero row!}} \\
G_{4,3} & (\Z/2\Z)^2 & 4^{616} 2^{84} 1^{35} & 4^{2816} 2^{252} 1^{43} & 4^{1792} 1^{43} & 735 \times 1835 & \text{\emph{zero row!}} \\
G_{4,4} & (\Z/2\Z)^2 & 4^{608} 2^{108} 1^{19} & 4^{2824} 2^{228} 1^{59} & 4^{2032} 2^{144} 1^{59} & 735 \times 2235 & <\text{ 1s} \\
G_{4,5} & (\Z/2\Z)^2 & 4^{616} 2^{84} 1^{35} & 4^{2816} 2^{252} 1^{43} & 4^{1840} 1^{43} & 735 \times 1883 & \text{\emph{zero row!}} \\
G_{4,6} & (\Z/2\Z)^2 & 4^{608} 2^{108} 1^{19} & 4^{2816} 2^{252} 1^{43} & 4^{2000} 2^{96} 1^{43} & 735 \times 2139 & \text{\emph{zero row!}} \\
G_{4,7} & (\Z/2\Z)^2 & 4^{608} 2^{108} 1^{19} & 4^{2808} 2^{276} 1^{27} & 4^{1600} 2^{144} 1^{27} & 735 \times 1771 & \text{\emph{zero row!}} \\
G_{4,8} & (\Z/2\Z)^2 & 4^{608} 2^{108} 1^{19} & 4^{2816} 2^{252} 1^{43} & 4^{1632} 2^{192} 1^{43} & 735 \times 1867 & \text{1398m 57s}\\
G_5 & \Z/5\Z & 5^{532} 1^{7} & 5^{2362} 1^{1} & 5^{2107} 1^{1} & 539 \times 2108 & \text{1977m 20s} \\
G_{6,1} & \Z/6\Z & 6^{428} 3^{29} 2^{4} 1^{4} & 6^{1928} 3^{69} 2^{16} 1^{4} & 6^{1464} 3^{54} 2^{14} 1^{4} & 465 \times 1536 & \text{21m 39s} \\
G_{6,2} & S_3 & 6^{400} 3^{85} 2^{3} 1^{6} & 6^{1862} 3^{201} 2^{13} 1^{10} & 6^{998} 3^{156} 2^{7} 1^{10} & 494 \times 1171 & <\text{ 1s} \\
G_{6,3} & S_3 & 6^{399} 3^{84} 2^{7} 1^{7} & 6^{1863} 3^{204} 2^{7} 1^{7} & 6^{904} 3^{162} 2^{7} 1^{7} & 497 \times 1080 & <\text{ 1s} \\
G_{7,1} & \Z/7\Z & 7^{376} 1^{35} & 7^{1685} 1^{16} & 7^{1200} 1^{16} & 411 \times 1216 & \text{\emph{zero row!}}\\
G_{7,2} & \Z/7\Z & 7^{381} & 7^{1687} 1^{2} & 7^{1620} 1^{2} & 381 \times 1622 & \text{\emph{orbits!}} \\
G_{7,3} & \Z/7\Z & 7^{381} & 7^{1686} 1^{9} & 7^{1668} 1^{9} & 381 \times 1677 & \text{2443m 17s} \\
G_{8,1} & \Z/8\Z & 8^{322} 4^{21} 2^{3} 1^{1} & 8^{1450} 4^{49} 2^{7} 1^{1} & 8^{1144} 4^{34} 2^{6} 1^{1} & 347 \times 1185 & \text{29s} \\
G_{8,2} & Q & 8^{322} 4^{19} 2^{6} 1^{3} & 8^{1450} 4^{42} 2^{21} 1^{1} & 8^{1160} 4^{28} 2^{12} 1^{1} & 350 \times 1201 & \text{8m 14s} \\
G_{8,3} & Q & 8^{322} 4^{18} 2^{9} 1^{1} & 8^{1450} 4^{43} 2^{18} 1^{3} & 8^{1160} 4^{16} 2^{18} 1^{3} & 350 \times 1197 & \text{1m 50s} \\
G_{9,1} & \Z/9\Z & 9^{294} 3^{7} & 9^{1310} 3^{7} & 9^{1177} 3^{7} & 301 \times 1184 & \text{57s} \\
G_{9,2} & (\Z/3\Z)^2 & 9^{291} 3^{15} 1^{3} & 9^{1299} 3^{39} 1^{3} & 9^{1077} 3^{27} 1^{3} & 309 \times 1107 & \text{11m 50s} \\
G_{31} & \Z/31\Z & 31^{86} 1^{1} & 31^{381} & 31^{270} & 87 \times 270 & \text{\emph{orbits!}} \\
\end{array}$}
\end{table}

\section{Conclusion}
We have shown that a binary $q$-analog of the Fano plane can only have very few automorphisms.
This result immediately nullifies many natural approaches for the construction, which would inherently imply too much symmetry.

From the point of view of computational complexity though, the vast part of the search space is still untouched, as it consists of the structures without any symmetry.
We believe that further theoretical insight is needed to reduce the complexity to a computationally feasible level.

After all, the question for the existence of a binary $q$-analog of the Fano plane is still wide open.

\section*{Acknowledgements}
The authors are grateful to the organizers of the conference \enquote{Conference on Random Network Codes and Designs over GF$(q)$} held at the Department of Mathematics of Ghent University, Belgium, from September 18--20, 2013, where the first steps towards the present article were initiated.
This conference was organized in the framework of the COST action IC1104, titled \enquote{Random Network Coding and Designs over GF$(q)$}.
Furthermore, we would like to thank the anonymous referees for valuable suggestions improving the readability of the paper.

\end{document}